\documentclass{amsart}
\usepackage{amsmath}
\usepackage{amssymb}
\usepackage{amsthm}
\usepackage{enumerate}
\usepackage{graphicx}
\usepackage{epstopdf}
\usepackage[colorlinks]{hyperref}
\usepackage{caption}
\usepackage{subcaption}
\usepackage{enumitem}
\usepackage{pgfplots, pgfplotstable, booktabs, colortbl, array}
\usepackage{algcompatible}
\usepackage{algorithm,algpseudocode,algorithmicx}

\DeclareCaptionType{mytype}[Algorithm]
\newenvironment{myenv}{}{}

\newtheorem{theorem}{Theorem}[section]
\newtheorem{lemma}[theorem]{Lemma}

\theoremstyle{definition}

\theoremstyle{remark}

\newtheorem{proposition}[theorem]{Proposition}

\numberwithin{equation}{section}

\DeclareMathOperator*{\argmin}{arg\,min}
\DeclareMathOperator*{\minimize}{minimize}

\begin{document}

\title{Embedding-Based Interpolation on the Special Orthogonal Group}

%\author{Evan S. Gawlik\thanks{Electronic address: \texttt{egawlik@ucsd.edu}} }
%\author{Melvin Leok\thanks{Electronic address: \texttt{mleok@ucsd.edu}}}
%\affil{Department of Mathematics, University of California, San Diego}

\author{Evan S. Gawlik}
\author{Melvin Leok}
\address{Department of Mathematics, University of California, San Diego, 9500 Gilman Drive \#0112, La Jolla, CA  92093-0112}
\curraddr{}
\email{egawlik@ucsd.edu}
\email{mleok@math.ucsd.edu}
\thanks{EG has been supported in part by the NSF under grants DMS-1411792, DMS-1345013. ML has been supported in part by the NSF under grants DMS-1010687, CMMI-1029445, DMS-1065972, CMMI-1334759, DMS-1411792, DMS-1345013.}

\subjclass[2010]{Primary 65D05, 65J99; Secondary 65N30, 49M25}

\date{}

\maketitle

\begin{abstract}
We study schemes for interpolating functions that take values in the special orthogonal group $SO(n)$.  Our focus is on interpolation schemes obtained by embedding $SO(n)$ in a linear space, interpolating in the linear space, and mapping the result onto $SO(n)$ via the closest point projection.  The resulting interpolants inherit both the order of accuracy and the regularity of the underlying interpolants on the linear space.  The values and derivatives of the interpolants admit efficient evaluation via either explicit formulas or iterative algorithms, which we detail for two choices of embeddings: the embedding of $SO(n)$ in the space of $n \times n$ matrices and, when $n=3$, the identification of $SO(3)$ with the set of unit quaternions.  Along the way, we point out a connection between these interpolation schemes and geodesic finite elements.  We illustrate the utility of these interpolation schemes by numerically computing minimum acceleration curves on $SO(n)$, a task which is handled naturally with $SO(n)$-valued finite elements having $C^1$-continuity.
\end{abstract}

\section{Introduction}

The special orthogonal group $SO(n)$ plays an important role in mechanics, computer graphics, and other applications, due in large part to its connection with rigid body rotations when $n=3$.  
The task of interpolating $SO(3)$-valued functions, in particular, arises in robotics~\cite{zefran1998generation}, animation~\cite{barr1992smooth,fangt1998real,ramamoorthi1997fast}, and the discretization of Cosserat continuum theories~\cite{sander2010geodesic,cao2006three}.   This paper studies schemes for interpolating such functions, with an emphasis on the case $n=3$ but with an eye toward general $n$ as well.  Our focus is on interpolation schemes obtained by embedding $SO(n)$ in a linear space, interpolating in the linear space, and mapping the result onto $SO(n)$ via the closest point projection.

The interpolants so constructed enjoy several desirable features.  First, they inherit the regularity of the underlying interpolant on the linear space.  This fact allows one to construct $SO(n)$-valued interpolants with $C^1$-continuity in a straightforward way.  Second, they inherit the order of accuracy of the underlying interpolant on the linear space.  They are also $SO(n)$-equivariant, in the sense that the interpolant transforms in the natural way when the function being interpolated is pre- or post-multiplied by an element of $SO(n)$.  Finally, their derivatives are easy to calculate, particularly when $n=3$ and quaternions are adopted to represent rotations.

Interpreted broadly, the use of an embedding for interpolation on $SO(n)$ is not without precedent.  The simplicity of this approach has attracted the attention of several prior authors~\cite{belta2002svd,gramkow2001averaging,han2008rotation,sarlette2009consensus,moakher2002means,grohs2013projection}, many of which have focused on the task of averaging rotations.
Less attention has been paid, however, to studying the derivatives of these interpolants, using these interpolants as finite elements, and studying their interpolation errors under refinement.
We give a comprehensive treatment of each of these topics in this paper.  Additionally, in our presentation of interpolation error estimates, we adopt enough generality that our results apply to a wide class of schemes for interpolating manifold-valued functions via embedding and projecting.

Alternative interpolation schemes on $SO(n)$ that do not make use of an embedding are widespread.  Perhaps the best-known example is spherical linear interpolation (abbreviated ``slerp''), in which two elements of $SO(n)$ are interpolated by the geodesic that joins them~\cite{shoemake1985animating}.  Usually this is done with the aid of quaternions when $n=3$.  This strategy leads readily to a scheme for constructing a continuous, piecewise smooth interpolant of an $SO(n)$-valued function defined on a interval.  Smoother analogues of these interpolants (called ``squads'') can be constructed using an algorithm resembling De Casteljau's algorithm, though their derivatives can be intricate to calculate~\cite{dam1998quaternions}.  A different generalization of spherical linear interpolation, which applies to manifold-valued functions defined on a domain in $\mathbb{R}^d$, $d \ge 1$, is provided by geodesic finite elements~\cite{sander2012geodesic,sander2010geodesic}. These elements, which can be designed with arbitrarily high order of accuracy~\cite{sander2015geodesic,grohs2015optimal}, are defined as solutions to a minimization problem that involves geodesic distances between the value of the interpolant and the values of the function at specified locations.  When $d=1$, they reduce to piecewise geodesics in the lowest order case.  

It is worthwhile to note that geodesic finite elements are continuous but not continuously differentiable.  In fact, a generalization of the theory of geodesic finite elements to the $C^1$ setting is not immediate, since $C^1$ finite elements typically make use of degrees of freedom that involve function values and their derivatives. For manifold-valued functions, the latter quantities belong to the manifold's tangent spaces, so a nontrivial generalization of the definition of a geodesic finite element seems necessary in order to incorporate such degrees of freedom.

Another class of interpolation strategies, which apply not only to interpolation on $SO(n)$ but also on any Lie group $G$, consists of methods that use the Lie group exponential map and its inverse to map elements of $G$ to the Lie algebra $\mathfrak{g}$ of $G$ and perform interpolation there~\cite{kim1995general,park1997smooth}.  If done carefully, interpolants having $C^1$-continuity and relatively simple derivatives can be constructed with this approach~\cite{kim1995general}.

It should be noted that the closely related but slightly simpler task of averaging rotations -- without necessarily constructing continuous or continuously differentiable interpolants of rotations -- is the subject of a vast body of literature.  A comprehensive review of this literature would be outside the scope of this paper, but a good survey is given in~\cite{hartley2013rotation}.  

The task of constructing continuously differentiable $SO(n)$-valued interpolants is much more than a pedantic exercise; it is a topic of longstanding interest in computer graphics and motion planning~\cite{kim1995general,park1997smooth,dam1998quaternions,park1995bezier}.  There, the interest is in constructing smooth motions of rigid bodies that interpolate specified orientations and, potentially, specified angular velocities.  A task of particular import is the construction of minimum acceleration curves -- smooth curves on $SO(3)$ that minimize angular acceleration in an $L^2$-sense, subject to suitable boundary conditions~\cite{barr1992smooth,ramamoorthi1997fast,gay2012invariant,park1997smooth}.  An analogous notion of optimality can be defined for curves on a Riemannian manifold.  The resulting minimizers, which can be thought of as higher-order generalizations of geodesics, are referred to as Riemannian cubics, owing to the fact that they reduce to cubic polynomials when the manifold under consideration is Euclidean~\cite{crouch1995dynamic,noakes1989cubic}.  

We show in this paper that the computation of minimum acceleration curves on $SO(n)$ is handled seamlessly with embedding-based interpolation schemes.  Since they allow one to easily construct $SO(n)$-valued finite elements with $C^1$-continuity, a conforming discretization of the minimum acceleration problem is readily obtained, leading to a finite-dimensional minimization problem.  In appropriate variables, this minimization problem is an unconstrained least squares problem, thereby admitting an efficient solution with standard algorithms such as the Levenberg-Marquardt algorithm~\cite{more1978levenberg}.  Under refinement, the numerical solution so obtained exhibits convergence to the exact solution with optimal order of accuracy.  

There are some parallels between the present work and certain subdivision schemes for manifold-valued functions, particularly those that make use of an embedding~\cite{wallner2005convergence,xie2007smoothness}.  Our results in Section~\ref{sec:theory} concerning the regularity and approximation properties of embedding-based interpolants are closely related to those established for such manifold-valued subdivision schemes.  In that context, the terms ``smoothness equivalence'' and ``approximation order equivalence'' have been used to describe the regularity and order of accuracy that these manifold-valued subdivision schemes inherit from their Euclidean counterparts~\cite{xie2011approximation,grohs2009smoothness}.

There are also parallels between the present work and geodesic finite elements.  We point out in Section~\ref{sec:geodesicFE} that if a geodesic finite element is constructed using a chordal metric -- the metric inherited from an embedding in a linear space -- then it coincides with the finite element one obtains by interpolating in the linear space with Lagrange polynomials and projecting the result onto the manifold via the closest point projection.

%\paragraph{Organization.} 
\subsubsection*{Organization} 
This paper is organized as follows.  In Section~\ref{sec:theory}, we define a class of interpolation operators for manifold-valued functions obtained from embedding and projecting, and we derive estimates for the error committed by these interpolants and their first derivatives.  We leave the manifold unspecified throughout Section~\ref{sec:theory}, since the arguments apply rather generally.  In Section~\ref{sec:SOn}, we specialize to the case in which the manifold under consideration is the special orthogonal group $SO(n)$.  We present interpolation schemes on $SO(n)$ based on two choices of embeddings: the embedding of $SO(n)$ in the space of $n \times n$ matrices and, when $n=3$, the identification of $SO(3)$ with the set of unit quaternions.  We derive explicit formulas and iterative algorithms for computing the values and derivatives of these interpolants.  In Section~\ref{sec:minaccel}, we illustrate the utility of these interpolation schemes by numerically computing minimum acceleration curves on $SO(n)$, a task which is handled naturally with $SO(n)$-valued finite elements having $C^1$-continuity.

\section{Embedding-Based Interpolation of Manifold-Valued Functions} \label{sec:theory}

In this section, we discuss a class of interpolation operators for manifold-valued functions obtained by embedding the manifold in a linear space, interpolating in the linear space, and mapping the result onto the manifold via the closest point projection.  We discuss several properties of these interpolants, including their pointwise accuracy, their regularity, the accuracy of their derivatives, and their connection with geodesic finite elements.

Let $M$ be a smooth Riemannian manifold embedded in $\mathbb{R}^p$, $p \ge 1$.  Let $\mathcal{D} \subset \mathbb{R}^d$, $d \ge 1$, be a compact, connected, Lipschitz domain.  Let $\mathcal{V}(\mathcal{D},\mathbb{R}^p)$ be a vector space of functions from $\mathcal{D}$ to $\mathbb{R}^p$ contained in $C(\mathcal{D},\mathbb{R}^p)$, the space of continuous functions from $\mathcal{D}$ to $\mathbb{R}^p$.  Let $\mathcal{V}_h(\mathcal{D},\mathbb{R}^p) \subset \mathcal{V}(\mathcal{D},\mathbb{R}^p)$ be a finite-dimensional subspace of $\mathcal{V}(\mathcal{D},\mathbb{R}^p)$.  Let
\[ 
\mathcal{I}_h : \mathcal{V}(\mathcal{D},\mathbb{R}^p) \rightarrow \mathcal{V}_h(\mathcal{D},\mathbb{R}^p)
\]
be a projection, hereafter referred to as an interpolation operator for $\mathbb{R}^p$-valued functions.  Our aim is to use $\mathcal{I}_h$ to construct an interpolation operator for (suitably regular) $M$-valued functions $u : \mathcal{D} \rightarrow M$ by projecting $\mathcal{I}_h u$ pointwise onto $M$.  To this end, let
\[
\mathcal{V}(\mathcal{D},M) = \{u \in \mathcal{V}(\mathcal{D},\mathbb{R}^p) : u(x) \in M \; \forall x \in \mathcal{D}\},
\]
and denote by $\|\cdot\|$ the Euclidean norm on $\mathbb{R}^p$.
%Let
In a tubular neighborhood $U \subset \mathbb{R}^p$ of $M$, the closest point projection 
\begin{align}\begin{split} \label{cpp}
\mathcal{P}_M : U  &\rightarrow M \\
u &\mapsto \argmin_{m \in M} \|m-u\|
\end{split}\end{align}
is well-defined and smooth; see~\cite[Theorem 10.19]{lee2003smooth}.  We shall abuse notation by using the same symbol $\mathcal{P}_M$ to denote the map
\begin{align*}
\mathcal{P}_M : C(\mathcal{D},U) \rightarrow C(\mathcal{D},M)
\end{align*}
which sends a continuous function $u : \mathcal{D} \rightarrow U$ to the continuous function $\mathcal{P}_M u : \mathcal{D} \rightarrow M$ given by 
\[
(\mathcal{P}_M u)(x) = \mathcal{P}_M(u(x))
\]
for every $x \in \mathcal{D}$.  
Now define
\begin{equation} \label{projectedinterp}
\mathcal{I}_{h,M} = \mathcal{P}_M \circ \left.\mathcal{I}_h\right|_{\widetilde{\mathcal{V}}(\mathcal{D},M)},
\end{equation}
where
\[
\widetilde{\mathcal{V}}(\mathcal{D},M) = \{ u \in \mathcal{V}(\mathcal{D},M) : \mathcal{I}_h u(x) \in U \; \forall x \in \mathcal{D}\}.
\]
%denotes the set of functions in $\mathcal{V}(\mathcal{D},M)$ whose ($\mathbb{R}^p$-valued) interpolants remain in the tubular neighborhood $U \supset M$.
We refer to $\mathcal{I}_{h,M}$ as an interpolation operator for $M$-valued functions, and we denote the image of $\widetilde{\mathcal{V}}(\mathcal{D},M)$ under $\mathcal{I}_{h,M}$ by $\mathcal{V}_h(\mathcal{D},M)$.
Note that the absence of a subscript $h$ on $\widetilde{\mathcal{V}}(\mathcal{D},M)$ is somewhat misleading in view of its dependence on $\mathcal{I}_h$.  We have chosen this notation to emphasize that $\widetilde{\mathcal{V}}(\mathcal{D},M)$ is, in general, an infinite-dimensional space.

\subsection{Properties of the Interpolant}

We now detail several features of the interpolation operator $\mathcal{I}_{h,M}$.  Our main observation is that many of the properties of $\mathcal{I}_{h,M}$ -- regularity and order of approximation -- are inherited from $\mathcal{I}_h$.

An immediate consequence of the definition of $\mathcal{I}_{h,M}$ and the smoothness of $\mathcal{P}_M$ is the following proposition, which leads to a simple method of constructing manifold-valued finite elements with higher regularity.
\begin{proposition}
If $u \in \widetilde{\mathcal{V}}(\mathcal{D},M)$ and $\mathcal{I}_h u \in C^k(\mathcal{D},\mathbb{R}^p)$, $k \ge 0$, then $\mathcal{I}_{h,M}u \in C^k(\mathcal{D},M)$.
\end{proposition}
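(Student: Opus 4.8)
The statement is essentially that composition preserves $C^k$-regularity, so the plan is to reduce the claim to the chain rule for smooth maps between Euclidean spaces (or manifolds). First I would recall the relevant setup: by definition \eqref{projectedinterp}, for $u \in \widetilde{\mathcal{V}}(\mathcal{D},M)$ we have $\mathcal{I}_{h,M}u = \mathcal{P}_M \circ (\mathcal{I}_h u)$, where $\mathcal{I}_h u : \mathcal{D} \to \mathbb{R}^p$ is the hypothesized $C^k$ map. Moreover, membership in $\widetilde{\mathcal{V}}(\mathcal{D},M)$ guarantees precisely that $\mathcal{I}_h u(x) \in U$ for all $x \in \mathcal{D}$, so the composition with $\mathcal{P}_M$ makes sense: the image of $\mathcal{I}_h u$ lies in the tubular neighborhood $U$ on which $\mathcal{P}_M : U \to M$ is defined.

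The key observation is that $\mathcal{P}_M : U \to M$ is smooth (i.e., $C^\infty$) by \cite[Theorem 10.19]{lee2003smooth}, as recalled just before \eqref{cpp}. Hence $\mathcal{I}_{h,M}u$ is the composition of a $C^k$ map $\mathcal{D} \to U \subset \mathbb{R}^p$ with a $C^\infty$ map $U \to M$. Since composition of a $C^k$ map with a $C^\infty$ map is again $C^k$ (for any $k \ge 0$, including the continuous case $k=0$), it follows that $\mathcal{I}_{h,M}u \in C^k(\mathcal{D},M)$. To make this precise one may work in local charts on $M$: around any point $\mathcal{P}_M(\mathcal{I}_h u(x_0))$ choose a smooth chart $\varphi$ for $M$; then $\varphi \circ \mathcal{P}_M$ is a smooth $\mathbb{R}^p$-valued map on a neighborhood in $U$, and $\varphi \circ \mathcal{I}_{h,M}u = (\varphi \circ \mathcal{P}_M) \circ (\mathcal{I}_h u)$ is $C^k$ as a composition of a $C^\infty$ map with a $C^k$ map between Euclidean spaces, by the usual chain rule and induction on the order of differentiation.

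There is no real obstacle here; the proposition is an immediate corollary of the smoothness of the closest point projection, and the only thing to be careful about is the domain issue — namely that $\mathcal{I}_h u$ actually takes values in $U$ — which is handled by definition via the space $\widetilde{\mathcal{V}}(\mathcal{D},M)$. The case $k=0$ is even more trivial, reducing to continuity of a composition of continuous maps. I expect the author's proof to be a one- or two-sentence remark to this effect.
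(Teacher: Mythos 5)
Your proposal is correct and follows the same route as the paper: the paper's proof simply notes that $\mathcal{I}_{h,M}u = \mathcal{P}_M \circ \mathcal{I}_h u$ is a composition of a $C^k$ map with the $C^\infty$ map $\mathcal{P}_M$, citing a standard composition result. Your additional remarks about the tubular neighborhood $U$ and local charts are fine but not needed beyond what the paper states.
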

\begin{proof}
Since $\mathcal{I}_h u$ is of class $C^k$ and $\mathcal{P}_M$ is of class $C^\infty$, the composition $\mathcal{I}_{h,M}u = \mathcal{P}_M \circ \mathcal{I}_h u$ is of class $C^k$~\cite[Proposition 3.2.8]{abraham2007manifolds}.
\end{proof}

To a large extent, the approximation properties of $\mathcal{I}_{h,M}$ are also inherited from $\mathcal{I}_h$. The following proposition, whose proof is notably elementary, shows that the pointwise error committed by the interpolant $\mathcal{I}_{h,M} u$ of a function $u \in \mathcal{V}(\mathcal{D},M)$ is no worse than that committed by $\mathcal{I}_h u$, up to a factor of 2.

\begin{proposition} \label{lemma:Linferror}
For any $u \in \widetilde{\mathcal{V}}(\mathcal{D},M)$ and any $x \in \mathcal{D}$,
\[
\|\mathcal{I}_{h,M} u(x) - u(x)\| \le 2 \|\mathcal{I}_h u(x) - u(x)\|.
\]
\end{proposition}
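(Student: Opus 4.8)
The key point is that $u(x)\in M$, so the closest point projection $\mathcal{P}_M$ cannot move $\mathcal{I}_h u(x)$ farther from $u(x)$ than $u(x)$ itself already is — indeed, $u(x)$ is a competitor in the minimization defining $\mathcal{P}_M(\mathcal{I}_h u(x))$. The plan is therefore to fix $x\in\mathcal{D}$, abbreviate $v = \mathcal{I}_h u(x)\in U$ and $m = u(x)\in M$, and write $\mathcal{I}_{h,M}u(x) = \mathcal{P}_M(v)$. By the definition of $\mathcal{P}_M$ in~\eqref{cpp} as the argmin of $\|m'-v\|$ over $m'\in M$, and since $m\in M$ is an admissible choice, we have
\[
\|\mathcal{P}_M(v) - v\| \le \|m - v\|.
\]
Then by the triangle inequality,
\[
\|\mathcal{P}_M(v) - m\| \le \|\mathcal{P}_M(v) - v\| + \|v - m\| \le 2\|v - m\|,
\]
which is exactly the claimed bound once we substitute back $v = \mathcal{I}_h u(x)$ and $m = u(x)$.

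I would also note the minor bookkeeping point that the hypothesis $u \in \widetilde{\mathcal{V}}(\mathcal{D},M)$ is what guarantees $\mathcal{I}_h u(x)\in U$, so that $\mathcal{P}_M(\mathcal{I}_h u(x))$ is well-defined and the argmin in~\eqref{cpp} is attained; this is the only place the tilde in $\widetilde{\mathcal{V}}$ is used. There is essentially no obstacle here — the statement is a one-line consequence of the variational characterization of the closest point projection combined with the triangle inequality, and the factor of $2$ is simply the cost of routing through $v$ to compare $\mathcal{P}_M(v)$ with $m$. The only thing worth a moment's care is making sure the argmin is genuinely a minimum (so that the inequality $\|\mathcal{P}_M(v)-v\|\le\|m-v\|$ is valid), which is furnished by the cited smoothness/well-definedness of $\mathcal{P}_M$ on the tubular neighborhood $U$.
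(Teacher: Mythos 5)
Your proposal is correct and follows exactly the paper's argument: triangle inequality through $\mathcal{I}_h u(x)$, combined with the fact that $u(x)\in M$ is a competitor in the minimization defining $\mathcal{P}_M$, so $\|\mathcal{P}_M \mathcal{I}_h u(x)-\mathcal{I}_h u(x)\|\le\|u(x)-\mathcal{I}_h u(x)\|$. The added remark about the role of $\widetilde{\mathcal{V}}(\mathcal{D},M)$ in guaranteeing well-definedness of the projection is accurate but not needed beyond what the paper already assumes.
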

\begin{proof}
The triangle inequality and the definition of $\mathcal{I}_{h,M}$ give
\begin{align*}
\|\mathcal{I}_{h,M} u(x) - u(x)\| 
&\le \|\mathcal{P}_M \mathcal{I}_h u(x) - \mathcal{I}_h u(x)\| +  \|\mathcal{I}_h u(x) - u(x)\|.
\end{align*}
Now since $u(x) \in M$, the definition of $\mathcal{P}_M$ implies that
\[
\|\mathcal{P}_M \mathcal{I}_h u(x) - \mathcal{I}_h u(x)\| \le \|u(x) - \mathcal{I}_h u(x)\|,
\]
which proves the claim.
\end{proof}

The next proposition shows furthermore that the pointwise error in the gradient of $\mathcal{I}_{h,M} u$ is essentially of the same order as the pointwise error in the gradient of $\mathcal{I}_h u$, provided that $u$ is sufficiently smooth.  Below, we denote by $\nabla u(x) \in \mathbb{R}^{p \times d}$ and $\nabla \mathcal{P}_M(v) \in \mathbb{R}^{p \times p}$ the gradients of $u$ and $\mathcal{P}_M$ at $x \in \mathcal{D}$ and $v \in U$, respectively, viewing $u$ and $\mathcal{P}_M$ as $\mathbb{R}^p$-valued functions via the embedding $M \subset \mathbb{R}^p$.  We denote
\begin{align*}
C_0(u) &= \sup_{x \in \mathcal{D}} \|\nabla u(x)\|, \\
C_1 &= \sup_{m \in M} \|\nabla \mathcal{P}_M(m)\|, \\
C_2 &= \sup_{u_1,u_2 \in U, \atop u_1 \neq u_2} \frac{ \|\nabla \mathcal{P}_M(u_1) - \nabla \mathcal{P}_M(u_2)\| }{\|u_1-u_2\|},
\end{align*}
where, for matrices, $\|\cdot\|$ denotes any consistent matrix norm. 

\begin{proposition} \label{lemma:W1inferror}
For any $u \in \widetilde{\mathcal{V}}(\mathcal{D},M) \cap C^1(\mathcal{D},\mathbb{R}^p)$ and any $x \in \mathcal{D}$,
\[\begin{split}
\|\nabla \mathcal{I}_{h,M} u(x) - \nabla u(x) \| \le C_1 &\| \nabla \mathcal{I}_h u(x) - \nabla u(x)\| \\
&+ C_2 \|\mathcal{I}_h u(x) - u(x)\| \left( \| \nabla \mathcal{I}_h u(x) - \nabla u(x)\|   + C_0(u) \right).
\end{split}\]
\end{proposition}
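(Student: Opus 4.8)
The plan is to differentiate the composition $\mathcal{I}_{h,M}u = \mathcal{P}_M \circ \mathcal{I}_h u$ via the chain rule, compare the result term by term with the analogous expression for $\nabla u$, and then control each piece using the constants $C_0(u)$, $C_1$, and $C_2$. First I would fix $x \in \mathcal{D}$ (a point at which $\mathcal{I}_h u$ is differentiable, so that the right-hand side of the asserted inequality is meaningful). Since $\mathcal{I}_h u(x) \in U$ by the definition of $\widetilde{\mathcal{V}}(\mathcal{D},M)$ and $\mathcal{P}_M$ is smooth on $U$, the chain rule gives $\nabla \mathcal{I}_{h,M} u(x) = \nabla \mathcal{P}_M(\mathcal{I}_h u(x))\, \nabla \mathcal{I}_h u(x)$. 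The key auxiliary observation is that, because $u$ is $M$-valued, $\mathcal{P}_M \circ u = u$ on $\mathcal{D}$, and differentiating this identity yields $\nabla u(x) = \nabla \mathcal{P}_M(u(x))\, \nabla u(x)$. Subtracting the two expressions and inserting the intermediate term $\nabla\mathcal{P}_M(\mathcal{I}_h u(x))\,\nabla u(x)$ gives
\[
\nabla \mathcal{I}_{h,M} u(x) - \nabla u(x) = \nabla \mathcal{P}_M(\mathcal{I}_h u(x)) \big( \nabla \mathcal{I}_h u(x) - \nabla u(x)\big) + \big( \nabla\mathcal{P}_M(\mathcal{I}_h u(x)) - \nabla\mathcal{P}_M(u(x)) \big)\nabla u(x).
\]

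Next I would take norms, using the consistency of the matrix norm to split the two matrix products. The second term is bounded immediately by $C_2 \|\mathcal{I}_h u(x) - u(x)\| \, C_0(u)$, since $\mathcal{I}_h u(x)$ and $u(x)$ both lie in $U$ and $\|\nabla u(x)\| \le C_0(u)$. For the first term, the only real subtlety — and the reason both $C_1$ and $C_2$ (rather than a single $\sup_{v\in U}\|\nabla\mathcal{P}_M(v)\|$) appear — is that $C_1$ is a supremum over $M$, whereas $\mathcal{I}_h u(x)$ generally lies in $U \setminus M$, so $\|\nabla\mathcal{P}_M(\mathcal{I}_h u(x))\|$ cannot be bounded by $C_1$ directly. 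The remedy is to route through the point $\mathcal{P}_M\mathcal{I}_h u(x) \in M$: by the triangle inequality and the definitions of $C_1$ and $C_2$,
\[
\|\nabla\mathcal{P}_M(\mathcal{I}_h u(x))\| \le \|\nabla\mathcal{P}_M(\mathcal{I}_h u(x)) - \nabla\mathcal{P}_M(\mathcal{P}_M\mathcal{I}_h u(x))\| + \|\nabla\mathcal{P}_M(\mathcal{P}_M\mathcal{I}_h u(x))\| \le C_2 \|\mathcal{I}_h u(x) - \mathcal{P}_M\mathcal{I}_h u(x)\| + C_1,
\]
and then $\|\mathcal{I}_h u(x) - \mathcal{P}_M\mathcal{I}_h u(x)\| \le \|\mathcal{I}_h u(x) - u(x)\|$ because $u(x) \in M$ while $\mathcal{P}_M\mathcal{I}_h u(x)$ is by definition the nearest point of $M$ to $\mathcal{I}_h u(x)$.

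Finally I would assemble the estimates: the first term is at most $\big(C_1 + C_2\|\mathcal{I}_h u(x) - u(x)\|\big)\,\|\nabla\mathcal{I}_h u(x) - \nabla u(x)\|$ and the second at most $C_2\|\mathcal{I}_h u(x) - u(x)\|\,C_0(u)$; adding them and grouping the two contributions carrying the factor $C_2\|\mathcal{I}_h u(x)-u(x)\|$ produces exactly the claimed inequality. Everything is elementary once the chain rule and the identity $\mathcal{P}_M\circ u = u$ are in place; I expect the step requiring genuine care to be the reduction from $U$ to $M$ in the bound on $\|\nabla\mathcal{P}_M(\mathcal{I}_h u(x))\|$, since it is what forces the appearance of $C_2$ in the coefficient multiplying $\|\nabla\mathcal{I}_h u(x) - \nabla u(x)\|$.
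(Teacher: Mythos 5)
Your argument is correct and yields exactly the stated inequality, but it uses a different decomposition than the paper. After the chain rule and the identity $\nabla u(x)=\nabla\mathcal{P}_M(u(x))\nabla u(x)$ (both of which you share with the paper), the paper inserts the cross term $\nabla\mathcal{P}_M(u(x))\,\nabla\mathcal{I}_h u(x)$, so the Lipschitz difference $\nabla\mathcal{P}_M(\mathcal{I}_h u(x))-\nabla\mathcal{P}_M(u(x))$ multiplies $\nabla\mathcal{I}_h u(x)$, while the factor bounded by $C_1$ is $\nabla\mathcal{P}_M(u(x))$ with $u(x)\in M$, so no detour is needed; the term $\|\nabla\mathcal{I}_h u(x)\|$ is then absorbed via $\|\nabla\mathcal{I}_h u(x)\|\le\|\nabla\mathcal{I}_h u(x)-\nabla u(x)\|+C_0(u)$. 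You instead insert $\nabla\mathcal{P}_M(\mathcal{I}_h u(x))\,\nabla u(x)$, which forces you to bound $\|\nabla\mathcal{P}_M(\mathcal{I}_h u(x))\|$ at a point of $U\setminus M$; your remedy—passing through $\mathcal{P}_M\mathcal{I}_h u(x)\in M$ and using the closest-point property $\|\mathcal{I}_h u(x)-\mathcal{P}_M\mathcal{I}_h u(x)\|\le\|\mathcal{I}_h u(x)-u(x)\|$ (the same fact driving Proposition~\ref{lemma:Linferror})—is valid because both points lie in $U$ where the $C_2$ Lipschitz bound applies, and it produces the same final constants. What the paper's grouping buys is a slightly shorter proof with no appeal to the minimizing property of $\mathcal{P}_M$; what your grouping buys is an explicit explanation of why $C_1$ can be taken as a supremum over $M$ only, and it keeps the factor $C_0(u)$ attached directly to $\|\nabla u(x)\|$ rather than arriving through a triangle inequality. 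Your remark about fixing a point where $\mathcal{I}_h u$ is differentiable is a reasonable precaution, though the presence of $\nabla\mathcal{I}_h u(x)$ in the statement already presupposes it.
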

\begin{proof}
The chain rule gives
\[
\nabla \mathcal{I}_{h,M} u(x) = \nabla \mathcal{P}_M(\mathcal{I}_h u(x)) \nabla \mathcal{I}_h u(x).
\]
On the other hand, since $\mathcal{P}_Mu=u$ pointwise,
\begin{align*}
\nabla u(x) 
&= \nabla (\mathcal{P}_M u)(x) \\
&= \nabla \mathcal{P}_M (u(x)) \nabla u(x).
\end{align*}
Thus,
\begin{align*}
\nabla \mathcal{I}_{h,M} u(x)  - \nabla u(x)
&= \nabla \mathcal{P}_M(\mathcal{I}_h u(x)) \nabla \mathcal{I}_h u(x) - \nabla \mathcal{P}_M (u(x)) \nabla u(x) \\
&= \left[ \nabla \mathcal{P}_M(\mathcal{I}_h u(x)) - \nabla \mathcal{P}_M (u(x)) \right] \nabla \mathcal{I}_h u(x) \\&\quad + \nabla \mathcal{P}_M (u(x)) \left[ \nabla \mathcal{I}_h u(x) - \nabla u(x) \right].
\end{align*}
It follows that
\begin{align*}
\|\nabla \mathcal{I}_{h,M} u(x)  - \nabla u(x)\|
&\le C_2 \|\mathcal{I}_h u(x) - u(x)\| \|\nabla \mathcal{I}_h u(x)\| + C_1 \|\nabla \mathcal{I}_h u(x) - \nabla u(x)\|.
\end{align*}
The conclusion then follows upon noting that
\begin{align*}
\|\nabla \mathcal{I}_h u(x)\| 
&\le \|\nabla \mathcal{I}_h u(x) - \nabla u(x)\| + \|\nabla u(x)\| \\
&\le \|\nabla \mathcal{I}_h u(x) - \nabla u(x)\| + C_0(u).
\end{align*}
\end{proof}

We remark that in typical applications, $\mathcal{I}_h$ is an interpolation operator parametrized by a discretization parameter $h$ such that, for some integer $r \ge 1$ and any sufficiently regular $u \in \mathcal{V}(\mathcal{D},\mathbb{R}^p)$, 
\begin{align*}
\sup_{x \in \mathcal{D}} \|\mathcal{I}_h u(x) - u(x)\| &= O(h^{r+1}), \\
\sup_{x \in \mathcal{D}} \|\nabla \mathcal{I}_h u(x) - \nabla u(x)\| &= O(h^r)
\end{align*}
as $h \rightarrow 0$.  In such a setting, the preceding two propositions imply that $\mathcal{I}_{h,M}u$ enjoys the same order of accuracy for sufficiently regular $u\in \widetilde{\mathcal{V}}(\mathcal{D},M)$.  That is,
\begin{align*}
\sup_{x \in \mathcal{D}} \|\mathcal{I}_{h,M} u(x) - u(x)\| &= O(h^{r+1}), \\
\sup_{x \in \mathcal{D}} \|\nabla \mathcal{I}_{h,M} u(x) - \nabla u(x)\| &= O(h^r).
\end{align*}

\subsection{Relationship to Geodesic Finite Elements} \label{sec:geodesicFE}
We conclude this section by pointing out a relationship between the interpolation operator $\mathcal{I}_{h,M}$ and geodesic finite elements~\cite{sander2012geodesic,sander2015geodesic,sander2010geodesic}.  Given a partition of a polyhedral domain $\mathcal{D}$ into simplices, an $r^{th}$-order geodesic finite element is an interpolant of an $M$-valued function $u : \mathcal{D} \rightarrow M$ whose value at any $x \in \mathcal{D}$ is given by
\begin{equation} \label{geodesicFE}
\argmin_{m \in M} \sum_i \varphi_i(x) \mathrm{dist}(m,u_i)^2,
\end{equation}
where $\{\varphi_i\}_i$ is a basis of Lagrange polynomials~\cite[Section 1.2.3]{ern2004theory} of degree $\le r$ on a simplex $K \subseteq \mathcal{D}$ containing $x$, and $\{u_i\}_i$ are the values of $u$ at the corresponding nodes.  Here, $\mathrm{dist} : M \times M \rightarrow \mathbb{R}$ denotes a distance function on $M$, which is typically defined intrinsically, without appealing to an embedding.  In the event that an embedding $M \subset \mathbb{R}^p$ is used to define a distance function via $\mathrm{dist}(m_1,m_2) = \|m_1-m_2\|$, the resulting geodesic finite element reduces to an interpolant of the form~(\ref{projectedinterp}).  This fact is detailed in the following proposition.

\begin{proposition} \label{lemma:geodesicFE}
Let $u \in \mathcal{V}(\mathcal{D},M)$.  Let $\{\varphi_i\}_i$ be a basis of Lagrange polynomials of degree $\le r$ on a simplex $K \subseteq \mathcal{D}$, and  let $\{u_i\}_i$ be the values $u$ at the corresponding nodes.  Then for any $x \in K$,
\[
\argmin_{m \in M} \sum_i \varphi_i(x) \|m-u_i\|^2 = \mathcal{P}_M \left( \sum_i \varphi_i(x) u_i \right),
\]
provided that $\sum_i \varphi_i(x) u_i$ belongs to the tubular neighborhood $U \supset M$ on which $\mathcal{P}_M$ is defined.
\end{proposition}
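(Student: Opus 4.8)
The plan is to exploit the fact that a Lagrange nodal basis is a partition of unity, which collapses the weighted sum of squared distances into a single squared Euclidean distance plus a term that is independent of $m$; the claim then follows immediately from the definition of the closest point projection.

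First I would record the partition-of-unity property: since the constant function $1$ is reproduced exactly by Lagrange interpolation of degree $\le r$ (with $r \ge 0$), the basis $\{\varphi_i\}_i$ satisfies $\sum_i \varphi_i(x) = 1$ for every $x \in K$. Set $\bar u = \sum_i \varphi_i(x) u_i$ (this is precisely $\mathcal{I}_h u(x)$ when $\mathcal{I}_h$ is Lagrange interpolation), and let $\langle \cdot,\cdot\rangle$ denote the Euclidean inner product on $\mathbb{R}^p$. Expanding each term $\|m-u_i\|^2 = \|m\|^2 - 2\langle m,u_i\rangle + \|u_i\|^2$ and using $\sum_i \varphi_i(x) = 1$ gives
\[
\sum_i \varphi_i(x)\|m-u_i\|^2
= \|m\|^2 - 2\langle m,\bar u\rangle + \sum_i \varphi_i(x)\|u_i\|^2
= \|m-\bar u\|^2 + \Bigl(\sum_i \varphi_i(x)\|u_i\|^2 - \|\bar u\|^2\Bigr).
\]
The parenthesized quantity does not depend on $m$, so minimizing the left-hand side over $m \in M$ is equivalent to minimizing $\|m-\bar u\|^2$, hence $\|m-\bar u\|$, over $m \in M$. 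By the definition~\eqref{cpp} of $\mathcal{P}_M$, and since $\bar u$ lies in the tubular neighborhood $U$ by hypothesis, the set of minimizers is exactly $\{\mathcal{P}_M(\bar u)\}$, which proves the identity.

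\textbf{Main obstacle.} There is no substantial obstacle here; the proof is essentially the partition-of-unity observation followed by completing the square. The only two points requiring a word of care are: (i) when $r \ge 2$ the weights $\varphi_i(x)$ need not be nonnegative, so the objective $\sum_i \varphi_i(x)\|m-u_i\|^2$ is not manifestly a reasonable (convex, coercive) function of $m$ a priori — the completion of the square is what reveals that it equals, up to an additive constant, the squared Euclidean distance from $m$ to $\bar u$; and (ii) the assertion is an equality of $\argmin$ \emph{sets}, so one should note that the hypothesis $\bar u \in U$ is exactly what guarantees the minimizer is unique, so that both sides denote the same single point of $M$.
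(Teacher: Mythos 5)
Your proposal is correct and follows essentially the same argument as the paper's proof: invoke the partition-of-unity property $\sum_i \varphi_i(x)=1$, expand the weighted sum of squared norms, and complete the square to reduce the problem to minimizing $\|m-\sum_i\varphi_i(x)u_i\|$, which is handled by the definition of $\mathcal{P}_M$. Your additional remarks on possibly negative weights and uniqueness of the minimizer on the tubular neighborhood are fine clarifications but not a departure from the paper's route.
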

\begin{proof}
%Observe that since $\sum_i \varphi_i(x) = 1$,
Since Lagrange polynomials interpolate constant scalar-valued functions exactly, $\sum_i \varphi_i(x) = 1$.  Thus,
\begin{align*}
\sum_i \varphi_i(x) \|m-u_i\|^2 
&= \sum_i \varphi_i(x) \left( \langle m, m \rangle - 2 \langle m, u_i \rangle + \langle u_i, u_i \rangle \right) \\
&= \langle m, m \rangle - 2 \langle m, \sum_i \varphi_i(x) u_i \rangle + \sum_i \varphi_i(x) \langle u_i, u_i \rangle \\
&= \left\| m - \sum_i \varphi_i(x) u_i\right\|^2 - \left\| \sum_i \varphi_i(x) u_i \right\|^2  + \sum_i \varphi_i(x) \langle u_i, u_i \rangle,
\end{align*}
where $\langle \cdot, \cdot \rangle$ denotes the Euclidean inner product.  Since the latter two terms in the last line above are independent of $m$, it follows that any minimizer of $\sum_i \varphi_i(x) \|m-u_i\|^2$ must be a minimizer of $\left\| m - \sum_i \varphi_i(x) u_i\right\|$, and vice versa.
\end{proof}

\section{Interpolation on the Special Orthogonal Group} \label{sec:SOn}

In this section, we specialize the interpolation operators considered in Section~\ref{sec:theory} to the case in which
\[
M=SO(n) = \{Q \in \mathbb{R}^{n \times n} : Q^T Q=I, \det Q > 0\},
\]
the special orthogonal group.  We consider two choices of embeddings: the embedding of $SO(n)$ in $\mathbb{R}^{n \times n}$, and, when $n=3$, the identification of $SO(3)$ with the set of unit quaternions.

\subsection{Embedding in $\mathbb{R}^{n \times n}$} \label{sec:Rnxn}

Consider the embedding of $SO(n)$ in $\mathbb{R}^{n \times n}$ (equipped with the Frobenius norm $\|A\|_F^2 = \mathrm{Tr}(A^T A)$).
The closest point projection~(\ref{cpp}) of a matrix $A \in \mathbb{R}^{n \times n}$ onto $SO(n)$ is given by
\[
\mathcal{P}_{SO(n)}(A) = \argmin_{Q \in SO(n)} \|Q-A\|_F.
\]
It is well-known~\cite[Theorem 1]{Fan1955} that if $\det A \neq 0$, then $\|Q-A\|_F$ has a unique minimizer among all $Q \in O(n) = \{Q \in \mathbb{R}^{n \times n} : Q^T Q = I\}$.  This minimizer is given by the orthogonal factor in the polar decomposition 
\[
A = QY, \quad Q \in O(n), \, Y \in \mathrm{Sym}_+(n),
\]
where $\mathrm{Sym}_+(n)$ denotes the space of symmetric positive definite $n \times n$ matrices.  If $\det A > 0$, then the minimizer in fact belongs to $SO(n)$.  From this it follows that $\mathcal{P}_{SO(n)}$ is well-defined and smooth on $GL_+(n) = \{A \in \mathbb{R}^{n \times n} : \det A > 0\}$, the connected component  of the identity in the general linear group.
The invariance of the Frobenius norm under pre- and post-multiplication by orthogonal matrices implies that 
\begin{equation} \label{biinvariantpolardecomp}
\mathcal{P}_{SO(n)} (UAV) = U \left( \mathcal{P}_{SO(n)} A \right) V, \quad \forall U,V \in SO(n), \, \forall A \in GL_+(n).
\end{equation}
 
%\paragraph{A Continuous Interpolant.} 
\subsubsection*{A Continuous Interpolant} 
We may construct a continuous $SO(n)$-valued interpolant $\mathcal{I}_{h,SO(n)} R$ of a function $R \in C(\mathcal{D},SO(n))$ as follows.  Assume that $\mathcal{D} \subset \mathbb{R}^d$ is a polyhedral domain that has been partitioned into $d$-dimensional simplices with maximum diameter $h$, and assume that these simplices have the property that the intersection of any two of them is either empty or a common $(d-1)$-dimensional face.
On each simplex $K \subseteq \mathcal{D}$, let $\{\varphi_i\}_i$ be a basis of Lagrange polynomials of degree $\le r$, and  let $\{R_i\}_i \subset SO(n)$ be the values of $R$ at the corresponding nodes.  Then for any $x \in K$, we may set
\begin{equation} \label{interp0}
\mathcal{I}_{h,SO(n)} R(x) = \mathcal{P}_{SO(n)} \left( \sum_i \varphi_i(x) R_i \right),
\end{equation}
provided that the determinant of $\sum_i \varphi_i(x) R_i$ is positive.  By Proposition~\ref{lemma:geodesicFE}, this interpolant is equivalent to a geodesic finite element with respect to the chordal metric on $SO(n)$, which defines the distance between two matrices $P,Q \in SO(n)$ as the Frobenius norm $\|P-Q\|_F$ of their difference.  In other words, an equivalent definition of $\mathcal{I}_{h,SO(n)} R$ is
\begin{equation} \label{chordalSOn}
\mathcal{I}_{h,SO(n)} R(x) = \argmin_{Q \in SO(n)} \sum_i \varphi_i(x) \|Q-R_i\|_F^2.
\end{equation}
The equivalence of~(\ref{interp0}) and~(\ref{chordalSOn}) is a fact that has been observed previously in other contexts; see, for instance,~\cite{moakher2002means}.

Propositions~\ref{lemma:Linferror}-\ref{lemma:W1inferror} ensure that this interpolation operator inherits the optimal approximation properties enjoyed by componentwise polynomial interpolation of $\mathbb{R}^{n \times n}$-valued functions.  Namely, upon viewing the simplicial partition as a member of a family of shape-regular partitions parametrized by $h$, we have
\begin{align}
\sup_{x \in \mathcal{D}} \|\mathcal{I}_{h,SO(n)} R(x) - R(x)\| &= O(h^{r+1}), \label{interp0approx0} \\
\sup_{x \in \mathcal{D}} \|\nabla \mathcal{I}_{h,SO(n)} R(x) - \nabla R(x)\| &= O(h^r) \label{interp0approx1}
\end{align}
for any sufficiently regular $R : \mathcal{D} \rightarrow SO(n)$ whose componentwise interpolant has positive determinant everywhere.  

By~(\ref{biinvariantpolardecomp}), this interpolation operator is $SO(n)$-equivariant, in the sense that if $U,V \in SO(n)$ and $\widetilde{R}(x) = U R(x) V$ for every $x \in \mathcal{D}$, then $\mathcal{I}_{h,SO(n)} \widetilde{R}(x) = U \left(\mathcal{I}_{h,SO(n)} R(x)\right) V$ for every $x \in \mathcal{D}$.

%\paragraph{A Continuously Differentiable Interpolant.} 
\subsubsection*{A Continuously Differentiable Interpolant} 
A continuously differentiable $SO(n)$-valued interpolant can be constructed by using $C^1$ shape functions rather than Lagrange polynomials.  For concreteness, consider the case in which $\mathcal{D}=[0,T]$ is an interval and $R \in C^1([0,T],SO(n))$.  Let $0 = t_0 < t_1 < \dots < t_N = T$ be a partition of $[0,T]$ into subintervals having maximum length $h$.  Since $\mathcal{D}$ is one-dimensional, we will use the letter $t$ as the independent variable here rather than $x$, and denote derivatives with respect to $t$ with overdot notation. Let $\phi_i : [0,1] \rightarrow \mathbb{R}$ and $\psi_i : [0,1] \rightarrow \mathbb{R}$, $i=0,1$, be cubic polynomials satisfying
\begin{align*}
\frac{d^l}{dt^l} \phi_i(j) &= \delta_{ij}\delta_{0l}, \\
\frac{d^l}{dt^l} \psi_i(j) &= \delta_{ij}\delta_{1l}.
\end{align*}
for each $i,j,l \in \{0,1\}$, where $\delta_{ij}$ denotes the Kronecker delta.  Explicitly,
\begin{alignat}{3}
\phi_0(t) &= 2t^3-3t^2+1, &\quad \phi_1(t) &= -2t^3+3t^2, \label{hermite1} \\
\psi_0(t) &= t^3-2t^2+t, &\quad \psi_1(t) &= t^3-t^2. \label{hermite2}
\end{alignat}
These are the Hermite cubic basis functions, so that
\[\begin{split}
\mathcal{I}_h R(t) = \sum_{i=0}^1 \phi_i\left(\frac{t-t_k}{t_{k+1}-t_k}\right) R(t_{k+i}) + (t_{k+1}-t_k) \psi_i\left(\frac{t-t_k}{t_{k+1}-t_k}\right) \dot{R}(t_{k+i}), \\ t \in [t_k,t_{k+1}], 0 \le k < N &
\end{split}\]
defines an interpolant of $R$ belonging to $C^1([0,T],\mathbb{R}^{n \times n})$.
Projecting this interpolant onto $SO(n)$ defines a interpolant of $R$ belonging to $C^1([0,T],SO(n))$ given by
\begin{equation} \label{interp1}
\begin{split}
%\mathcal{I}_{h,SO(n)} R(t) = \mathcal{P}_{SO(n)} \left( \sum_{i=0}^1 \phi_i\left(\frac{t-t_k}{t_{k+1}-t_k}\right) R(t_{k+i}) + (t_{k+1}-t_k) \psi_i\left(\frac{t-t_k}{t_{k+1}-t_k}\right) \dot{R}(t_{k+i}) \right),  \\ t \in [t_k,t_{k+1}], \, 0 \le k < N.&
\mathcal{I}_{h,SO(n)} R(t) &= \mathcal{P}_{SO(n)} \left( \sum_{i=0}^1 \phi_i\left(\frac{t-t_k}{t_{k+1}-t_k}\right) R(t_{k+i}) \right. \\&\left.\quad\;+ (t_{k+1}-t_k) \psi_i\left(\frac{t-t_k}{t_{k+1}-t_k}\right) \dot{R}(t_{k+i}) \right), \quad t \in [t_k,t_{k+1}], \, 0 \le k < N.
\end{split}
\end{equation}
Unlike~(\ref{interp0}), this interpolant is not an instance of a geodesic finite element~(\ref{geodesicFE}).  However, it is $SO(n)$-equivariant in view of~(\ref{biinvariantpolardecomp}).
Propositions~\ref{lemma:Linferror}-\ref{lemma:W1inferror}, together with well-known properties of Hermite cubic interpolation, ensure that the interpolant~(\ref{interp1}) enjoys approximation error estimates of the form~(\ref{interp0approx0}-\ref{interp0approx1}) with $r=3$.

\subsubsection{First-Order Derivatives of the Interpolant} \label{sec:deriv}

In this section, we study the derivatives of the interpolants~(\ref{interp0}) and~(\ref{interp1}).  Without loss of generality, we focus on differentiating the orthogonal factor $Q(t)$ in the polar decomposition
\begin{equation} \label{polardecomp}
A(t)=Q(t)Y(t), \quad Q(t) \in O(n), \, Y(t) \in \mathrm{Sym}_+(n),
\end{equation}
of an $n \times n$ nonsingular matrix $A(t)$ whose entries depend smoothly on a single parameter $t$.  Derivatives of the interpolant~(\ref{interp1}) will follow readily.  On the other hand, derivatives of the interpolant~(\ref{interp0}) in each of the coordinate directions $e_j$, $j=1,2,\dots,d$, can be obtained by considering the matrix
\[
A(t) = \sum_i \varphi_i(x+te_j) R_i,
\]
and noting that if $Q(t)$ is related to $A(t)$ according to~(\ref{polardecomp}), then
\[
\frac{\partial}{\partial x_j}\mathcal{I}_{h,SO(n)} R(x) = \left.\frac{d}{dt}\right|_{t=0} Q(t).
\]

To compute the derivative of $Q(t)$, differentiate the decomposition~(\ref{polardecomp}) to obtain
\begin{equation} \label{Adot}
\dot{A} = \dot{Q}Y + Q\dot{Y}.
\end{equation}
Next, multiply by $Q^T = Q^{-1}$ to obtain
\begin{equation} \label{QTAdot}
Q^T \dot{A} = \Omega Y + \dot{Y},
\end{equation}
where $\Omega = Q^T \dot{Q} \in \mathfrak{so}(n) = \{ \Omega \in \mathbb{R}^{n \times n} : \Omega+\Omega^T = 0\}$.  Since $Y$ and $\dot{Y}$ are symmetric and $\Omega$ is skew-symmetric, the skew-symmetric part of~(\ref{QTAdot}) reads
\begin{equation} \label{syl1}
 Y\Omega + \Omega Y  = Q^T \dot{A} - \dot{A}^T Q.
\end{equation}
Given $Q$, $Y$, and $\dot{A}$, this is a Lyapunov equation for the unknown $\Omega$ which, by the positive-definiteness of $Y$, has a unique solution. In principle, this Lyapunov equation can be solved numerically using standard algorithms~\cite{bartels1972solution,golub1979hessenberg}.  If $n$ is large, however, more efficient methods are available, as we explain toward the end of this section.

An alternative means of finding $\dot{Q}$ is based on differentiating the relation
\[
Y(t)^2 = A(t)^T A(t).
\]
One finds that the symmetric matrix $\dot{Y}$ must satisfy a Lyapunov equation
\begin{equation} \label{syl2}
Y \dot{Y} + \dot{Y} Y = \dot{A}^T A + A^T \dot{A}.
\end{equation}
Upon solving for $\dot{Y}$, the value of $\dot{Q}$ is immediate from~(\ref{Adot}):
\begin{equation} \label{Qdot}
\dot{Q} = (\dot{A}-Q\dot{Y}) Y^{-1}.
\end{equation}

The remainder of this section is devoted to two independent tasks.  First, we derive more explicit formulas for $\dot{Q}$ in special cases.  Second, we develop efficient methods for computing $\dot{Q}$ when explicit formulas are unavailable.

%\paragraph{Explicit Formula at the Nodes.}
\subsubsection*{Explicit Formula at the Nodes}
If $A(t_0)$ is orthogonal for some $t_0$, then the derivative of the orthogonal factor $Q(t)$ in its polar decomposition~(\ref{polardecomp}) simplifies considerably at $t=t_0$.  In this setting, $Q(t_0)=A(t_0)$ and $Y(t_0)=I$, allowing~(\ref{syl1}) and~(\ref{syl2}) to be rewritten in the form
\begin{align*}
\dot{Q} &= Q \, \mathrm{skew}(A^{-1}\dot{A}), \\
\dot{Y} &= Y \, \mathrm{sym}(A^{-1}\dot{A})
\end{align*}
at $t=t_0$, where $\mathrm{skew}(B) = \frac{1}{2}(B-B^T)$ and $\mathrm{sym}(B) = \frac{1}{2}(B+B^T)$ denote the 
skew-symmetric and symmetric parts, respectively, of a square matrix $B$.

A consequence of this observation is that the derivatives of the interpolant~(\ref{interp0}) at the nodes $\{v_i\}_i$ of a Lagrangian finite element on a simplex $K$ are explicitly computable.  Namely, since $R(v_k)=R_k \in SO(n)$,
\[
\frac{\partial}{\partial x_j}\mathcal{I}_{h,SO(n)}R(v_k) =  R_k \,\mathrm{skew}\left( R_k^T \left(\sum_{i=1}^m \frac{\partial \varphi_i}{\partial x_j}(v_k) R_i \right) \right)
\]
for each $k$ and each $j=1,2,\dots,d$.

Likewise, for the interpolant~(\ref{interp1}),
\begin{align*}
\frac{d}{d t}\mathcal{I}_{h,SO(n)}R(t_k) 
&=  R(t_k) \,\mathrm{skew}\left( R(t_k)^T \dot{R}(t_k) \right) \\
&= \dot{R}(t_k).
\end{align*}

%\paragraph{Explicit Formula in Three Dimensions.}
\subsubsection*{Explicit Formula in Three Dimensions}
In dimension $n=3$, explicit formulas for the derivatives of the orthogonal factor in the polar decomposition of a smooth matrix-valued function $A : \mathbb{R} \rightarrow \mathbb{R}^{3 \times 3}$ are known.  Namely, if~(\ref{polardecomp}) is the polar decomposition of $A(t) \in \mathbb{R}^{3 \times 3}$, then~\cite[p. 181]{Chen1993}
\[
\dot{Q}(t) = 2Q(t) \left(\det Z(t)\right)^{-1} Z(t) \, \mathrm{skew}\left( A(t)^{-1} \dot{A}(t)  Y(t) \right) Z(t) ,
\]
where
\[
Z(t) = \mathrm{Tr}(Y(t)) I - Y(t).
\]

%\paragraph{Explicit Formula for Linear Univariate Polynomials.}
\subsubsection*{Explicit Formula for Linear Univariate Polynomials}
If $A(t)$ is the componentwise linear interpolant of two matrices in $SO(n)$, $n \ge 1$, then explicit formulas for the derivatives of $Q(t)$ and $Y(t)$ are also obtainable. To illustrate this fact, consider the interpolant~(\ref{interp0}) on a one-dimensional domain $\mathcal{D}=[0,T]$ using piecewise linear polynomials ($r=1$).  
In this setting, the interpolant~(\ref{interp0}) reduces to
\begin{equation} \label{linearinterp}
\mathcal{I}_{h,SO(n)} R(t) = \mathcal{P}_{SO(n)} \left( \frac{t_{k+1}-t}{t_{k+1}-t_k} R_k + \frac{t-t_k}{t_{k+1}-t_k} R_{k+1} \right), \quad t \in [t_k,t_{k+1}], \, 0 \le k < N,
\end{equation}
where $0 = t_0 < t_1 < \dots t_N = T$ is a partition of $[0,T]$, $R_k = R(t_k)$, and we have used the letter $t$ instead of $x$ to denote the independent variable.  The following lemma gives a formula for the derivative of $\mathcal{I}_{h,SO(n)} R(t)$ on each interval $[t_k,t_{k+1}]$.  In an abuse of notation, we set $t_k=0$, $t_{k+1}=h$, and $k=0$ in what follows.
\begin{lemma}
Let $R_0,R_1 \in SO(n)$ and $h>0$ be given. For each $t \in [0,h]$, let
\begin{equation} \label{Alinear}
A(t) = \frac{h-t}{h} R_0 + \frac{t}{h} R_1.
\end{equation}
Let $A(t)=Q(t)Y(t)$ be the polar decomposition of $A(t)$, where $Q(t)$ is orthogonal and $Y(t)$ is symmetric positive definite.  Then
\begin{align}
\dot{Y}(t) &= Y(t) \, \mathrm{sym}(A(t)^{-1} \dot{A}(t)), \label{dY} \\
\dot{Q}(t) &= Q(t) \, \mathrm{skew}(A(t)^{-1} \dot{A}(t)) \label{dR}.
\end{align}
\end{lemma}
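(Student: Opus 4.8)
The plan is to exploit the explicit affine structure of $A(t)$ together with the orthogonality of $R_0$ and $R_1$ to show that the matrix $B(t) := Q(t)^T\dot A(t)$ commutes with $Y(t)$; granting this, both formulas follow from the Lyapunov equation~(\ref{syl2}) and the relation~(\ref{Qdot}) already derived above, by a short algebraic manipulation with symmetric and skew-symmetric parts.

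First I would substitute~(\ref{Alinear}) and $\dot A = (R_1-R_0)/h$ into $A^TA$ and $A^T\dot A$. Writing $T_0 = R_0^TR_1$ and $S = T_0 + T_0^T$, and using $R_0^TR_0 = R_1^TR_1 = I$ to collapse the cross terms, one obtains $A(t)^TA(t) = \frac{(h-t)^2+t^2}{h^2}I + \frac{(h-t)t}{h^2}S$ and $A(t)^T\dot A(t) = \frac{1}{h^2}\bigl((h-t)T_0 - tT_0^T + (2t-h)I\bigr)$. Forming the commutator of these two expressions, the identity terms drop out and what remains is $\frac{(h-t)t}{h^3}[T_0,T_0^T]$. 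The crucial point -- the one that distinguishes a linear interpolant of orthogonal matrices from a general matrix curve -- is that $T_0 = R_0^TR_1$ is itself orthogonal, so $T_0T_0^T = T_0^TT_0 = I$ and hence $[T_0, T_0^T] = 0$. Thus $A^T\dot A$ commutes with $A^TA = Y^2$, and since the symmetric positive definite square root $Y$ is a matrix function of $Y^2$, it also commutes with $Y$ and with $Y^{-1}$. Because $Q^T = Y^{-1}A^T$, the matrix $B = Q^T\dot A = Y^{-1}A^T\dot A$ then commutes with $Y$, and so do $B^T$, $\mathrm{sym}(B)$, and $\mathrm{skew}(B)$.

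With this in hand the rest is bookkeeping. Using that $B$ commutes with $Y$, one checks $Y\,\mathrm{sym}(A^{-1}\dot A) = \tfrac12 Y\bigl(Y^{-1}B + B^TY^{-1}\bigr) = \mathrm{sym}(B)$, which is symmetric and satisfies $Y\,\mathrm{sym}(B) + \mathrm{sym}(B)\,Y = 2Y\,\mathrm{sym}(B) = Y(B + B^T) = A^T\dot A + \dot A^TA$; hence $\dot Y := Y\,\mathrm{sym}(A^{-1}\dot A)$ solves the Lyapunov equation~(\ref{syl2}), and by uniqueness of its solution this is~(\ref{dY}). Substituting into~(\ref{Qdot}), $\dot A - Q\dot Y = \dot A - Q\,\mathrm{sym}(B) = \tfrac12\bigl(\dot A - Q\dot A^TQ\bigr) = Q\,\mathrm{skew}(B)$, so $\dot Q = Q\,\mathrm{skew}(B)\,Y^{-1} = Q\,\mathrm{skew}(A^{-1}\dot A)$, the last step again using that $B$ commutes with $Y^{-1}$; this is~(\ref{dR}).

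The only genuine obstacle is the first step: recognizing that the right quantity to track is $B = Q^T\dot A$ and that it commutes with $Y$. This is precisely where the hypotheses $R_0,R_1 \in SO(n)$ and the affine form~(\ref{Alinear}) of $A(t)$ enter essentially -- the commutation fails for a generic matrix curve, which is why the formulas~(\ref{dY})--(\ref{dR}) hold in general only at points where $A$ is orthogonal. Everything after that is routine manipulation of symmetric and skew-symmetric parts.
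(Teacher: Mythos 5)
Your proof is correct, and it is organized differently from the paper's. The paper first reduces without loss of generality to $R_0=I$, observes that the resulting $A(t)$ commutes with $A(t)^T$ so that $Q(t)$ and $Y(t)$ commute, differentiates $A=YQ$ to get $A^{-1}\dot A = Q^T\dot Q + Y^{-1}\dot Y$, and justifies reading off the skew and symmetric parts by solving the Lyapunov equation for $\dot Y$ in closed form, $\dot Y = \tfrac{h-2t}{2h^2}Y^{-1}(R_1+R_1^T-2I)$, which exhibits $\dot Y$ as commuting with $Y$ and $Q$. You instead work with the unreduced $A(t)$, where $Q$ and $Y$ need not commute, and your key lemma is different: the commutator $[A^T\dot A,\,A^TA]$ vanishes because $R_0^TR_1$ is orthogonal (your coefficient $\tfrac{(h-t)t}{h^3}[T_0,T_0^T]$ is exactly right), hence $B=Q^T\dot A=Y^{-1}A^T\dot A$ commutes with $Y$; you then verify that the candidate $Y\,\mathrm{sym}(A^{-1}\dot A)$ solves the Lyapunov equation~(\ref{syl2}) and conclude by uniqueness (the Sylvester operator $X\mapsto YX+XY$ is invertible since $Y$ is positive definite), after which~(\ref{dR}) follows from~(\ref{Qdot}) exactly as you compute. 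Both arguments rest on the same commutativity forced by orthogonality of $R_0^TR_1$; yours buys a more self-contained route that avoids the equivariance reduction, the appeal to the $Q$--$Y$ commutation for normal matrices, and the explicit formula for $\dot Y$, while the paper's route produces that explicit formula as a useful by-product. Your closing remark correctly identifies where the hypotheses enter and why the formulas fail for a generic curve away from orthogonal points; the only implicit assumption, shared with the paper, is that $A(t)$ is nonsingular on $[0,h]$ so that the polar factors are smooth and~(\ref{syl2}), (\ref{Qdot}) apply.
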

\begin{proof}
We may assume without loss of generality that $R_0 = I$, so that
\begin{equation} \label{Awlog}
A(t) = \frac{h-t}{h} I + \frac{t}{h} R_1.
\end{equation}
Then $A(t)$ commutes with $A(t)^T$, from which it follows~\cite{higham2008functions} that $Q(t)$ commutes with $Y(t)$.

We claim that if furthermore $\dot{Y}(t)$ can be shown to commute with $Q(t)$ and $Y(t)$, then equations~(\ref{dY}-\ref{dR}) follow readily.  Indeed, if this is the case, then differentiating the relation $A(t)=Y(t)Q(t)$ and pre-multiplying by $A(t)^{-1}=Q(t)^T Y(t)^{-1}$ gives
\begin{align}
A^{-1} \dot{A}
&= Q^T Y^{-1} (Y \dot{Q} + \dot{Y} Q) \nonumber \\
&= Q^T \dot{Q} + Q^T Y^{-1} \dot{Y} Q \nonumber \\
&= Q^T \dot{Q} + Q^T Q Y^{-1} \dot{Y} \nonumber \\
&= Q^T \dot{Q} + Y^{-1} \dot{Y}. \label{AinvAprime}
\end{align}
Since $Q(t)$ is orthogonal for all $t$, $Q^T \dot{Q}$ is skew-symmetric.  On the other hand, since $Y^{-1}$ and $\dot{Y}$ are symmetric and commute with one another, $Y^{-1} \dot{Y}$ is symmetric.  These observations lead to~(\ref{dY}-\ref{dR}).

It remains to check that $\dot{Y}$ commutes with $Q$ and $Y$.  To do so, note first that by~(\ref{Awlog}),
\begin{align}
Y(t)^2
&= A(t)^T A(t)\nonumber \\
&= \left[ \left(\frac{h-t}{h}\right)^2 + \left(\frac{t}{h}\right)^2 \right] I + \left(\frac{h-t}{h}\right) \left(\frac{t}{h}\right) (R_1+R_1^T). \label{Ysoln}
\end{align}
In particular, $Y$ is of the form $Y = (\alpha I + \beta (R_1+R_1^T))^{1/2}$ with $\alpha$ and $\beta$ scalars, so $Y$ commutes with $R_1+R_1^T$.
Furthermore, $\dot{Y}$ is a symmetric matrix satisfying
\begin{align*}
Y(t)\dot{Y}(t) + \dot{Y}(t) Y(t) 
&= \dot{A}(t)^T A(t) + A(t)^T \dot{A}(t). \\
&= \left(\frac{h-2t}{h^2} \right) (R_1 + R_1^T - 2I).
%&= \left(1 - \frac{2t}{h} \right) (R_1 + R_1^T - 2I).
\end{align*}
A direct calculation, invoking the commutativity of $Y$ and $R_1+R_1^T$, confirms that the solution to this equation is
\begin{equation} \label{Yprimesoln}
\dot{Y}(t) = \left(\frac{h-2t}{2h^2} \right) Y(t)^{-1} (R_1 + R_1^T - 2I).
%\dot{Y}(t) = \left(\frac{1}{2} - \frac{t}{h} \right)  Y(t)^{-1} (R_1 + R_1^T - 2I).
\end{equation}
Since $\dot{Y}$ is of the form $\dot{Y} = Y^{-1}(\alpha I + \beta Y^2)$ with $\alpha$ and $\beta$ scalars, and since $Y$ commutes with $Q$, it follows that $\dot{Y}$ commutes with both $Q$ and $Y$.
\end{proof}
Written more explicitly, the preceding lemma shows that if $A(t)$ is of the form~(\ref{Alinear}) and $Q(t)=\mathcal{P}_{SO(n)}(A(t))$, then
\begin{equation} \label{djRlinear}
\dot{Q}(t) =  Q(t) \, \mathrm{skew}\left(  \left(\frac{h-t}{h} R_0 + \frac{t}{h} R_1 \right)^{-1} \left( \frac{R_1-R_0}{h} \right) \right).
\end{equation}
In particular,
\begin{equation} \label{RTRdotskew}
\dot{Q}(0) =  R_0 \, \mathrm{skew}\left( R_0^T \left( \frac{R_1-R_0}{h} \right) \right)
\end{equation}
and
\[
\dot{Q}(h) =  R_1 \, \mathrm{skew}\left( R_1^T \left( \frac{R_1-R_0}{h} \right) \right).
\]
In addition,
\begin{align}
Q(h/2)^T \dot{Q}(h/2) 
&=  \mathrm{skew}\left( \left(\frac{R_0+R_1}{2}\right)^{-1} \left( \frac{R_1-R_0}{h} \right) \right) \nonumber \\
&= \frac{2}{h} \mathrm{skew}\left( \left(I+R_0^T R_1\right)^{-1} \left( R_0^T R_1 - I \right) \right) \nonumber \\
&= \frac{1}{h} \mathrm{cay}^{-1} (R_0^T R_1), \label{RTRdotcay}
\end{align}
where 
\begin{align*}
\mathrm{cay} : \mathfrak{so}(n) &\rightarrow SO(n) \\ 
\Omega &\mapsto \left(I-\frac{\Omega}{2}\right)^{-1} \left( I+\frac{\Omega}{2} \right)
\end{align*}
denotes the Cayley transform, and
\[
\mathrm{cay}^{-1}(R) = 2(I+R)^{-1}(R-I)
\]
denotes its inverse.

%\paragraph{Iterative Computation of the First-Order Derivatives.}
\subsubsection*{Iterative Computation of the First-Order Derivatives}
We now consider cases in which explicit formulas for the derivative of the orthogonal factor $Q(t)$ in the polar decomposition~(\ref{polardecomp}) are unavailable.  If this is the case, several numerical algorithms can be used to compute $\dot{Q}$.  

Suppose, for instance, that the polar decomposition~(\ref{polardecomp}) has been computed.  Then one can obtain $\dot{Q}$ by solving the Lyapunov equation~(\ref{syl1}) for $\Omega$ and computing $\dot{Q}=Q\Omega$.  Alternatively, one can solve the Lyapunov equation~(\ref{syl2}) for $\dot{Y}$ and compute $\dot{Q}$ via~(\ref{Qdot}).  Algorithms for the solution of Lyapunov equations, however, are generally expensive for large $n$, having computational cost (measured in floating point operations) close to an order of magnitude more than the cost of inverting a matrix~\cite{bartels1972solution,golub1979hessenberg}.

A more computationally efficient approach for large $n$ leverages iterative algorithms for computing the polar decomposition.  Such algorithms typically adopt fixed-point iterations of the form
\begin{equation} \label{iteration}
X_{k+1} = g(X_k), \quad X_0 = A,
\end{equation}
whose iterates $X_k$ tend to $Q$ as $k \rightarrow \infty$.
Two examples are the Newton iteration, which uses
\begin{equation} \label{Newton}
g(X) = \frac{1}{2}(X + X^{-T}),
\end{equation}
and the Newton-Schulz iteration, which uses
\begin{equation} \label{NewtonSchulz}
g(X) = \frac{1}{2} X (3I - X^T X).
\end{equation}
It is known that the Newton iteration converges quadratically to the orthogonal factor $Q$ in the polar decomposition~(\ref{polardecomp}) of $A$ for any nonsingular $n \times n$ matrix $A$~\cite[Theorem 8.12]{higham2008functions}, while the Newton-Schulz iteration converges quadratically to $Q$ provided that every singular value of $A$ lies in the interval $(0,\sqrt{3})$~\cite[Problem 8.20]{higham2008functions}.

Formally, we can differentiate the iteration~(\ref{iteration}) with respect to $t$ and obtain an algorithm for computing both $Q$ and $\dot{Q}$.  Denoting $E_k = \dot{X}_k$, the general form of such an algorithm reads
\begin{alignat}{3}
X_{k+1} &= g(X_k), &\quad& X_0 = A, \label{Xupdate} \\
E_{k+1} &= L_g(X_k,E_k), &\quad& E_0 = \dot{A}, \label{Eupdate}
\end{alignat}
where $L_g(X,E)$ denotes the Fr\'{e}chet derivative of $g$ at $X$ in the direction $E$.  It is shown in~\cite{Gawlik2016b} that under rather general circumstances, the coupled iteration~(\ref{Xupdate}-\ref{Eupdate}) produces iterates $X_k$ and $E_k$ that converge to $Q$ and $\dot{Q}$, respectively.  For the case in which $g(X)$ is given by~(\ref{Newton}), the resulting algorithm reads
\begin{alignat}{3}
X_{k+1} &= \frac{1}{2}(X_k + X_k^{-T}), &\quad& X_0 = A, \label{XupdateNewton} \\
E_{k+1} &= \frac{1}{2}(E_k - X_k^{-T} E_k^T X_k^{-T}), &\quad& E_0 = \dot{A}, \label{EupdateNewton}
\end{alignat}
When $g(X)$ is given by~(\ref{NewtonSchulz}), the algorithm reads
\begin{alignat}{3}
X_{k+1} &= \frac{1}{2}X_k(3I-X_k^T X_k), &\quad& X_0 = A, \label{XupdateNewtonSchulz} \\
E_{k+1} &= \frac{1}{2}E_k(3I-X_k^T X_k) - \frac{1}{2}X_k(E_k^T X_k + X_k^T E_k), &\quad& E_0 = \dot{A}. \label{EupdateNewtonSchulz}
\end{alignat}

\subsubsection{Higher-Order Derivatives of the Interpolant}

In some applications, such as those addressed in Section~\ref{sec:minaccel}, higher-order derivatives of the interpolants~(\ref{interp0}) and~(\ref{interp1}) are desired.  Here, we focus on computing $\ddot{Q} = \frac{d}{dt}\dot{Q}$, where $Q(t)$ is the orthogonal factor in the polar decomposition~(\ref{polardecomp}) of a matrix $A(t)$.  We also show how to compute the derivatives of $Q$, $\dot{Q}$, and $\ddot{Q}$ with respect to a parameter, assuming that $A(t)$ depends smoothly on an additional parameter which we will call $\varepsilon$.

To compute $\ddot{Q}$, consider the iteration~(\ref{XupdateNewton}-\ref{EupdateNewton}) for computing $Q$ and $\dot{Q}$.  Differentiating~(\ref{EupdateNewton}) with respect to $t$ and setting $F_k = \dot{E}_k = \ddot{X}_k$ leads formally to the following iteration for computing $\ddot{Q}$:
\begin{equation} \label{Fupdate}
F_{k+1} = \frac{1}{2} (F_k - X_k^{-T} F_k^T X_k^{-T} + 2X_k^{-T} E_k^T X_k^{-T} E_k^T X_k^{-T}), \quad F_0 = \ddot{A}.
\end{equation}
The combined iteration (that is, (\ref{XupdateNewton}), (\ref{EupdateNewton}), and~(\ref{Fupdate})),  in terms of $W_k := X_k^{-1} E_k$ and $T_k := X_k^{-1} F_k$, reads
\begin{alignat}{3}
X_{k+1} &= \frac{1}{2}(X_k + X_k^{-T}), &\quad& X_0 = A, \label{Xupdate2} \\
W_{k+1} &= \frac{1}{2}X_{k+1}^{-1} (X_k W_k - X_k^{-T} W_k^T), &\quad& W_0 = A^{-1}\dot{A}, \label{Wupdate2} \\
T_{k+1} &= \frac{1}{2}X_{k+1}^{-1} \left(X_k T_k - X_k^{-T} (T_k - 2W_k^2)^T\right), &\quad& T_0 = A^{-1}\ddot{A}. \label{Tupdate2}
\end{alignat}

If furthermore $A(t)$ depends smoothly on a parameter $\varepsilon$, then a similar argument can be used to construct iterative schemes for computing the derivatives of $Q$, $\dot{Q}$, and $\ddot{Q}$ with respect to $\varepsilon$.  Denote $\delta=\frac{\partial}{\partial\varepsilon}$, $U_k = X_k^{-1} \delta X_k$, $V_k = X_k^{-1} \delta \dot{X}_k$, and $Z_k = X_k^{-1} \delta \ddot{X}_k$.  A straightforward, but tedious, calculation shows that $U_k$, $V_k$, and $Z_k$ satisfy the recursions
\begin{alignat}{3}
U_{k+1} &= \frac{1}{2}X_{k+1}^{-1} (X_k U_k - X_k^{-T} U_k^T), &\quad& U_0 = A^{-1}\delta A, \label{Uupdate2} \\
V_{k+1} &= \frac{1}{2}X_{k+1}^{-1} \left(X_k V_k - X_k^{-T} (V_k - W_k U_k - U_k W_k)^T \right), &\quad& V_0 = A^{-1}\delta \dot{A}, \label{Vupdate2} \\
Z_{k+1} &= \frac{1}{2}X_{k+1}^{-1} \Big(X_k Z_k - X_k^{-T} \big[Z_k + 2W_k (W_k U_k - U_k W_k - V_k) &&\nonumber\\&\hspace{0.8in}+ 2(U_k W_k - V_k) W_k - U_k T_k -T_k U_k )\big]^T \Big), &\quad& Z_0 = A^{-1}\delta  \ddot{A}, \label{Zupdate2}
\end{alignat}
Our numerical experiments suggest that as $k \rightarrow \infty$, the matrices $U_k$, $V_k$, and $Z_k$ tend to $Q^T \delta Q$, $Q^T \delta \dot{Q}$, and $Q^T \delta \ddot{Q}$, respectively, although a justification of this observation would require showing that $\lim_{k\rightarrow \infty}$ commutes with differentiation.

\subsubsection{Remarks}

We conclude our discussion of the interpolants~(\ref{interp0}) and~(\ref{interp1}) with a couple of remarks.

%\paragraph{Relationship with Variational Integrators.} 
\subsubsection*{Relationship with Variational Integrators}
Equations~(\ref{RTRdotskew}) and~(\ref{RTRdotcay}) demonstrate a relationship between the interpolant~(\ref{linearinterp}) and certain \emph{variational integrators} for rigid body dynamics~\cite{marsden2001discrete,lee2005lie}.  A variational integrator for rigid body dynamics is a numerical integrator obtained by discretizing Hamilton's principle, which states that the evolution of a rigid body's configuration $R(t) \in SO(3)$ extremizes
\begin{equation} \label{action}
\int_0^T \ell(R(t)^T \dot{R}(t)) \, dt
\end{equation}
among all curves $R : [0,T] \rightarrow SO(3)$ with fixed endpoints.  Here, $\ell : \mathfrak{so}(3) \rightarrow \mathbb{R}$ denotes the (reduced) Lagrangian: the body's kinetic energy minus its potential energy.  Two common discretizations of~(\ref{action}) are~\cite{bou2009hamilton}
\[
\int_0^T \ell(R(t)^T \dot{R}(t)) \, dt \approx \sum_{k=0}^{N-1} h \, \ell \left( \mathrm{skew}\left( R_k^T \left( \frac{R_{k+1}-R_k}{h} \right) \right) \right)
\]
and
\[
\int_0^T \ell(R(t)^T \dot{R}(t)) \, dt \approx \sum_{k=0}^{N-1} h \, \ell \left( \frac{1}{h}\mathrm{cay}^{-1}(R_k^T R_{k+1}) \right).
\]
In view of~(\ref{RTRdotskew}) and~(\ref{RTRdotcay}), these are nothing more than rectangle-rule and midpoint-rule approximations, respectively, to
\[
\int_0^h \ell(Q(t)^T \dot{Q}(t)) \, dt,
\]
where $Q(t) = \mathcal{I}_{h,SO(3)}R(t)$ denotes the $1^{st}$-order interpolant~(\ref{linearinterp}) of $\{R_k\}_{k=0}^N$ on  a uniform grid $t_k = kh$, $k=0,1,\dots,N$.

%\paragraph{Superconvergence to Geodesics.} 
\subsubsection*{Superconvergence to Geodesics} 
Interestingly, the $1^{st}$-order interpolant~(\ref{linearinterp}) provides a superconvergent approximation of geodesics with respect to the canonical bi-invariant metric on $SO(n)$.  This fact is detailed in the following lemma, whose proof can be found in~\cite{Gawlik2016}.

\begin{lemma}
Let $R_0 \in SO(n)$, let $K \in \mathbb{R}^{n \times n}$ be an antisymmetric matrix, and let $R_1 = R_0 e^{hK}$.  For each $t \in [0,h]$, let 
\[
\left(\frac{h-t}{h}\right) R_0 + \left(\frac{t}{h}\right) R_1 = Q(t) Y(t)
\]
be the polar decomposition of $\left(\frac{h-t}{h}\right) R_0 + \left(\frac{t}{h}\right) R_1$, where $Q(t)$ is orthogonal and $Y(t)$ is symmetric positive definite.  Then
\[
Q(t) = R_0 e^{tK} + O(h^3)
\]
for every $t \in [0,h/2) \cup (h/2,h]$.  When $t=h/2$, the equality $R(t) = R_0 e^{tK}$ holds exactly.
\end{lemma}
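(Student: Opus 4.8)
The plan is to use $SO(n)$-equivariance to reduce to the orthogonal polar factor of $\widetilde{A}(t) := \tfrac{h-t}{h}I + \tfrac{t}{h}e^{hK}$, and then to diagonalize $K$ so that the problem splits into a family of explicitly solvable two-dimensional problems. For the reduction: writing $\tfrac{h-t}{h}R_0 + \tfrac{t}{h}R_1 = R_0\big(\tfrac{h-t}{h}I + \tfrac{t}{h}e^{hK}\big) = R_0\,\widetilde A(t)$, equation~(\ref{biinvariantpolardecomp}) gives $Q(t) = R_0\,\widetilde Q(t)$, where $\widetilde Q(t)$ is the orthogonal factor in the polar decomposition of $\widetilde A(t)$; since also $R_0 e^{tK}$ is $R_0$ times the geodesic issuing from the identity, it suffices to show $\widetilde Q(t) = e^{tK} + O(h^3)$ for $t \in [0,h]$ (at $t = h/2$ we will in fact obtain exact equality). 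Throughout, $K$ is fixed and $h \to 0$.

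Next I would diagonalize $K$. Since $K$ is real antisymmetric, there is a fixed orthonormal basis in which $K$ is block diagonal, with $2\times 2$ blocks $\theta_\ell J$, $J = \left(\begin{smallmatrix} 0 & -1 \\ 1 & 0\end{smallmatrix}\right)$, $\theta_\ell \in \mathbb{R}$, together with a zero block. In this basis $\widetilde A(t)$ has the same block structure, its polar decomposition is computed blockwise, and on the zero block $\widetilde Q(t) = I = e^{tK}$. Thus we may take $K = \theta J$ with $\theta$ a fixed constant. Then $e^{hK} = \cos(h\theta)I + \sin(h\theta)J$, so $\widetilde A(t) = cI + dJ$ with $c = 1 - \tfrac{t}{h}(1 - \cos h\theta)$ and $d = \tfrac{t}{h}\sin h\theta$; one checks $\widetilde A(t)^T\widetilde A(t) = (c^2+d^2)I$, whence $Y(t) = \sqrt{c^2+d^2}\,I$ and $\widetilde Q(t) = (c^2+d^2)^{-1/2}(cI + dJ) = \cos\phi\,I + \sin\phi\,J$ with $\phi = \arctan(d/c)$ (legitimate since $c > 0$ for small $h$). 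Writing $s = t/h \in [0,1]$ and Taylor expanding in $h$, one gets $c = 1 + O(h^2)$ and $d = s\sin h\theta = t\theta + O(h^3)$, hence $d/c = t\theta + O(h^3)$ and so $\phi = t\theta + O(h^3)$, uniformly in $t$ because the expansion coefficients are bounded polynomials in $s$. Therefore $\widetilde Q(t) = e^{\phi J} = e^{t\theta J} e^{(\phi - t\theta)J} = e^{tK} + O(h^3)$, and reassembling the blocks gives the estimate. For $t = h/2$ I would instead compute directly: in each block, $s = \tfrac12$ gives $\widetilde A(h/2) = \tfrac12(I + e^{hK}) = \tfrac{1+\cos h\theta}{2}I + \tfrac{\sin h\theta}{2}J = \cos\tfrac{h\theta}{2}\big(\cos\tfrac{h\theta}{2}I + \sin\tfrac{h\theta}{2}J\big)$, which for $h$ small (so $\cos\tfrac{h\theta}{2} > 0$) is already in polar-decomposed form; hence $\widetilde Q(h/2) = \cos\tfrac{h\theta}{2}I + \sin\tfrac{h\theta}{2}J = e^{hK/2}$, i.e.\ $Q(h/2) = R_0 e^{(h/2)K}$ exactly.

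The step needing the most care, and the one that makes the lemma nontrivial, is explaining why the bound is $O(h^3)$ rather than merely $O(h^2)$: the matrix $\widetilde A(t)$ already differs from the geodesic at order $h^2$, since $\widetilde A(t) - e^{tK} = \tfrac{s(1-s)h^2}{2}K^2 + O(h^3)$, with the $O(h^2)$ term nonzero for $t \notin \{0, h/2, h\}$. The resolution is that $\tfrac{s(1-s)h^2}{2}K^2$ is symmetric (a scalar multiple of $K^2$) and commutes with $e^{tK}$, so it is absorbed entirely into the symmetric factor $Y(t)$ and never reaches the orthogonal factor $\widetilde Q(t)$; only the $O(h^3)$, antisymmetric part of the discrepancy perturbs $\widetilde Q(t)$. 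The block-diagonalization is precisely the device that exposes this mechanism, since it reduces $Y(t)$ to a scalar multiple of the identity and collapses the entire estimate to the scalar bound $\phi = t\theta + O(h^3)$ on the rotation angle.
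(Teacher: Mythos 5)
Your proposal is correct, and it is in fact more self-contained than what the paper does: the paper does not prove this lemma at all, but simply defers to the external reference \cite{Gawlik2016}. Your route — factor out $R_0$ using the equivariance~(\ref{biinvariantpolardecomp}), pass to a fixed orthonormal basis in which the antisymmetric $K$ is block diagonal with $2\times 2$ blocks $\theta_\ell J$ (plus a zero block), and note that the polar decomposition respects orthogonal conjugation and block structure — reduces everything to the scalar computation $\widetilde Q(t)=e^{\phi J}$ with $\tan\phi = d/c$, $c=1-\tfrac{t}{h}(1-\cos h\theta)$, $d=\tfrac{t}{h}\sin h\theta$, and the expansion $\phi = t\theta + O(h^3)$, uniform in $t\in[0,h]$ because the coefficients are bounded in $s=t/h\in[0,1]$; the midpoint identity $\tfrac12(I+e^{hK})=\cos\tfrac{h\theta}{2}\,e^{(h\theta/2)J}$ (blockwise) gives the exact equality at $t=h/2$ for $h$ small enough that $\cos\tfrac{h\theta_\ell}{2}>0$, which is consistent with the standing assumption that $Y(t)$ is positive definite. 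This is in the same spirit as the paper's preceding lemma (which also normalizes $R_0=I$ and exploits that $Y$ commutes with $R_1+R_1^T$), but your diagonalization goes one step further and makes $Y$ a scalar on each block, which is exactly what exposes why the $O(h^2)$ discrepancy between the chord and the geodesic is symmetric and gets absorbed into $Y$, leaving only an $O(h^3)$ perturbation of the orthogonal factor. Minor points worth making explicit if you write this up: the blockwise polar factors assemble into the polar decomposition by uniqueness (valid for conjugation by any fixed orthogonal matrix, not just special orthogonal), and the application of~(\ref{biinvariantpolardecomp}) requires $\det\widetilde A(t)>0$, which holds for small $h$ since $\widetilde A(t)=I+O(h)$ uniformly in $t$.
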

\begin{proof}
See~\cite{Gawlik2016}.
\end{proof}
%The preceding lemma shows that the \emph{first-order} $SO(n)$-valued interpolant~(\ref{linearinterp}) delivers a  \emph{third-order} approximation to geodesics on $SO(n)$ with respect to the canonical bi-invariant metric.

\subsection{Embedding in the Space of Quaternions} \label{sec:quat}

If $n=3$, then instead of embedding $SO(3)$ in $\mathbb{R}^{3 \times 3}$, we may opt to identify $SO(3)$ with the set of elements of unit length in the space $\mathbb{H}$ of quaternions.  Considered as a vector space, $\mathbb{H} = \mathbb{R}^4$, so the unit quaternions constitute the 3-sphere $S^3$.  Every vector $u \in S^3$ can be written in the form
\[
u = \left(\cos\left(\frac{\theta}{2}\right), v_1\sin\left(\frac{\theta}{2}\right), v_2\sin\left(\frac{\theta}{2}\right), v_3\sin\left(\frac{\theta}{2}\right)\right)
\]
for some $\theta \in [0,\pi]$ and some unit vector $v = (v_1,v_2,v_3) \in \mathbb{R}^3$.  In the usual identification of quaternions with rotations, $u$ is identified with a rotation about the axis $v$ by an angle $\theta$.  Under this correspondence, multiplication in $SO(3)$ corresponds to multiplication in $\mathbb{H}$ according to the rule
\[\begin{split}
(u_1,u_2,u_3,u_4)(w_1,w_2,w_3,w_4) = (&u_1 w_1 - u_2 w_2 - u_3 w_3 - u_4 w_4, \\
&u_1 w_2 + u_2 w_1 + u_3 w_4 - u_4 w_3, \\
&u_1 w_3 - u_2 w_4 + u_3 w_1 + u_4 w_2, \\
&u_1 w_4 + u_2 w_3 - u_3 w_2 + u_4 w_1).
\end{split}\]
The closest point projection $\mathcal{P}_{S^3} : \mathbb{R}^4 \setminus \{0\} \rightarrow S^3$ is nothing more than normalization:
\[
\mathcal{P}_{S^3}(q) = \frac{q}{\|q\|},
\]
where $\|\cdot\|$ denotes the Euclidean norm.  The invariance of the Euclidean norm under rotations implies that
\begin{equation} \label{biinvariantquat}
\mathcal{P}_{S^3}(uqw) = u \left(\mathcal{P}_{S^3} (q) \right) w, \quad \forall u,w \in S^3,
\end{equation}
where we have used concatenation to denote quaternion multiplication.

The analogues of the interpolation in operators~(\ref{interp0}) and~(\ref{interp1}) are straightforward to write down.  For a function $u \in C(\Omega,S^3)$, the analogue of~(\ref{interp0}) is the interpolant
\begin{equation} \label{interp2}
\mathcal{I}_{h,S^3} \, u(x) = \mathcal{P}_{S^3} \left( \sum_i \varphi_i(x) u_i \right),
\end{equation}
where $\{\varphi_i\}_i$ is a basis of Lagrange polynomials of degree $\le r$ on a simplex $K \subseteq \mathcal{D}$ containing $x$, and $\{u_i\}_i$ are the values of $u$ at the corresponding nodes of $K$.  Equivalently, by Proposition~\ref{lemma:geodesicFE}, this interpolant is a geodesic finite element on $S^3$ with respect to the chordal metric $\mathrm{dist}(v,w)=\|v-w\|$:
\[
\mathcal{I}_{h,S^3} \, u(x) = \argmin_{w \in S^3} \sum_i \varphi_i(x) \|w-u_i\|^2.
\]
For a function $u \in C^1([0,T],S^3)$, the analogue of~(\ref{interp1}) reads
\begin{equation} \label{interp3}
\begin{split}
\mathcal{I}_{h,S^3} \, u(t) = \mathcal{P}_{S^3} \left( \sum_{i=0}^1 \phi_i\left(\frac{t-t_k}{t_{k+1}-t_k}\right) u(t_{k+i}) + (t_{k+1}-t_k) \psi_i\left(\frac{t-t_k}{t_{k+1}-t_k}\right) \dot{u}(t_{k+i}) \right),  \\ t \in [t_k,t_{k+1}], \, 0 \le k < N,&
\end{split}
\end{equation}
where $\phi_0,\phi_1,\psi_0,\psi_1$ are the Hermite cubic basis functions~(\ref{hermite1}-\ref{hermite2}).

By~(\ref{biinvariantquat}), both~(\ref{interp2}) and~(\ref{interp3}) are equivariant under rotations, and they enjoy the same approximation properties as~(\ref{interp0}) and~(\ref{interp1}), respectively, in view of Propositions~\ref{lemma:Linferror} and~\ref{lemma:W1inferror}.

\subsubsection{Derivatives of the Interpolant}

To differentiate the interpolants~(\ref{interp2}) and~(\ref{interp3}), it is enough to derive formulas for the derivatives of $\mathcal{P}_{S^3}(q(t))$, where $q$ is an $\mathbb{R}^4$-valued function of a single parameter $t$.  This is a trivial calculus exercise that can be done without regarding $q$ as a quaternion, but the result is more illuminating when expressed in the language of quaternions.  To do so, we introduce the following notation.  If $q = (q_1,q_2,q_3,q_4) \in \mathbb{H}$, we denote by $q^* = (q_1,-q_2,-q_3,-q_4)$  the conjugate of $q$ and by $q^2=qq$ the square of $q$.  We denote the real and imaginary parts of $q$ by
\[
\mathrm{Re}(q) = \frac{1}{2}(q+q^*) = (q_1,0,0,0)
\]
and
\[
\mathrm{Im}(q) = \frac{1}{2}(q-q^*) = (0,q_2,q_3,q_4)
\]
respectively.  If $q$ is nonzero, we denote the inverse of $q$ by $q^{-1} = q^*/\|q\|^2$.  For a real quaternion $q=(q_1,0,0,0)$ and a scalar $x$, we write $q^x=(q_1^x,0,0,0)$.  In this notation, we obtain the following formulas involving the first and second derivatives of $\mathcal{P}_{S^3}(q(t))$.

\begin{lemma}
If $q \in C^1(I,\mathbb{H})$ is nonzero on an interval $I \subseteq \mathbb{R}$ and $u(t) = \mathcal{P}_{S^3}(q(t)) = \frac{q(t)}{\|q(t)\|}$, then
\begin{equation} \label{uinvudot}
u(t)^{-1} \dot{u}(t) = \mathrm{Im}\left(q(t)^{-1}\dot{q}(t)\right)
\end{equation}
for every $t \in I$.  If furthermore $q \in C^2(I,\mathbb{H})$, then
\begin{equation} \label{ddtuinvudot}
\frac{d}{dt} \left( u(t)^{-1} \dot{u}(t) \right) = \mathrm{Im}\left(q(t)^{-1}\ddot{q}(t) - \left(q(t)^{-1}\dot{q}(t)\right)^2 \right)
\end{equation}
for every $t \in I$.
\end{lemma}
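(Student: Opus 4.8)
The plan is to reduce both identities to elementary quaternion algebra, exploiting the fact that $\|q(t)\|$ is real-valued and hence a \emph{central} element of $\mathbb{H}$. Write $r(t) = \|q(t)\| > 0$, regarded as the real quaternion $(r(t),0,0,0)$, so that $u = q\,r^{-1}$, and observe that since $r^{-1}$ and $\dot r$ commute with every quaternion we have $u^{-1} = r\,q^{-1}$ and $\dot u = \dot q\,r^{-1} - q\,\dot r\,r^{-2}$. The first key observation is that $u^{-1}\dot u$ is purely imaginary for every $t$: conjugation is $\mathbb{R}$-linear and componentwise, so differentiating $u^* u = 1$ (valid because $\|u\|\equiv 1$) gives $\dot u^* u + u^* \dot u = 0$; since $(u^*\dot u)^* = \dot u^* u$ and $u^{-1} = u^*$ (again because $\|u\|\equiv1$), this says $(u^{-1}\dot u)^* = -\,u^{-1}\dot u$. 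Consequently $\frac{d}{dt}(u^{-1}\dot u)$ is purely imaginary as well. Since $\mathrm{Im}$ is $\mathbb{R}$-linear and annihilates real quaternions, it therefore suffices to compute $u^{-1}\dot u$ and its derivative up to a real-valued summand.

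For \eqref{uinvudot}, I would multiply out $u^{-1}\dot u = (r\,q^{-1})\bigl(\dot q\,r^{-1} - q\,\dot r\,r^{-2}\bigr)$, moving the central factors $r,r^{-1},\dot r$ freely past the quaternions; the product collapses to $u^{-1}\dot u = q^{-1}\dot q - \dot r/r$. Because $\dot r/r$ is real, applying $\mathrm{Im}$ and using that $u^{-1}\dot u$ equals its own imaginary part yields $u^{-1}\dot u = \mathrm{Im}(u^{-1}\dot u) = \mathrm{Im}(q^{-1}\dot q)$. (As a consistency check, matching real parts instead recovers $\dot r/r = \mathrm{Re}(q^{-1}\dot q)$, which is just $\tfrac12\tfrac{d}{dt}\log\|q\|^2$.)

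For \eqref{ddtuinvudot}, I would differentiate the identity $u^{-1}\dot u = q^{-1}\dot q - \dot r/r$ just obtained. Differentiating $q^{-1}q = 1$ gives $\frac{d}{dt}(q^{-1}) = -\,q^{-1}\dot q\,q^{-1}$, so the product rule gives $\frac{d}{dt}(q^{-1}\dot q) = -\,q^{-1}\dot q\,q^{-1}\dot q + q^{-1}\ddot q = q^{-1}\ddot q - (q^{-1}\dot q)^2$. The term $\frac{d}{dt}(\dot r/r)$ is real, hence killed by $\mathrm{Im}$, and since $\frac{d}{dt}(u^{-1}\dot u)$ is purely imaginary, \eqref{ddtuinvudot} follows. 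I expect no genuine obstacle here: the only thing requiring care is the noncommutativity of quaternion multiplication, so factors must be kept in order — and in particular $(q^{-1}\dot q)^2$ must be read as $q^{-1}\dot q\,q^{-1}\dot q$, not as $q^{-2}\dot q^{\,2}$.
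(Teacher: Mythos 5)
Your proof is correct and follows essentially the same route as the paper: differentiate the normalization $u = q/\|q\|$ directly and, for the second identity, differentiate the first one using $\frac{d}{dt}q^{-1} = -q^{-1}\dot{q}\,q^{-1}$. The only (cosmetic) difference is how the real part is discarded — the paper arrives at $u^{-1}\dot{u} = \tfrac12\bigl(q^{-1}\dot{q} - (q^{-1}\dot{q})^*\bigr)$ by direct algebra, whereas you write $u^{-1}\dot{u} = q^{-1}\dot{q} - \dot{r}/r$ and invoke the skewness of $u^{-1}\dot{u}$ obtained from differentiating $u^*u = 1$.
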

\begin{proof}
Since $q^*q=qq^*=(\|q\|^2,0,0,0)$, we may write $u = q (q^*q)^{-1/2}$ and differentiate to obtain
\begin{align*}
\dot{u} 
&= \dot{q} (q^*q)^{-1/2} - \frac{1}{2} q (\dot{q}^*q + q^* \dot{q}) (q^*q)^{-3/2} \\
&= \frac{1}{2} \left( \frac{\dot{q}}{\|q\|} - \frac{q\dot{q}^*q}{\|q\|^3} \right) \\
&= \frac{1}{2} \left( \frac{\dot{q}}{\|q\|} - \frac{q \dot{q}^* (q^{-1})^*}{\|q\|} \right).
\end{align*}
Multiplying by $u^{-1} = \|q\| q^{-1}$ proves~(\ref{uinvudot}).  
To prove~(\ref{ddtuinvudot}), differentiate~(\ref{uinvudot}) and use the fact that$\frac{d}{dt} q^{-1} = -q^{-1}\dot{q}q^{-1}$.
\end{proof}

Note that it is sometimes the case that $q(t)^{-1}\dot{q}(t)$ is imaginary for certain values of $t$.  This holds, for instance, at each node $t_k$, $k=0,1,\dots,N$, when $q(t)$ is the input to $\mathcal{P}_{S^3}$ in~(\ref{interp3}).   If this is the case, then~(\ref{ddtuinvudot}) reduces to
\[
\frac{d}{dt} \left( u(t)^{-1} \dot{u}(t) \right) = \mathrm{Im}\left(q(t)^{-1}\ddot{q}(t)\right),
\]
since $\mathrm{Im}(v^2)=0$ for any imaginary $v \in \mathbb{H}$.

\section{Minimum Acceleration Curves on the Special Orthogonal Group} \label{sec:minaccel}

In this section, we use the preceding theory to construct a numerical method for approximating \emph{minimum acceleration} curves on $SO(n)$.  Roughly speaking, a minimum acceleration curve on $SO(n)$ is a smooth map $R : [0,T] \rightarrow SO(n)$ which locally minimizes
\begin{equation} \label{torque}
\int_0^T \|\dot{\Omega}\|^2 \, dt
\end{equation}
subject to certain constraints, where $\Omega = R^T \dot{R}$, and $\|\cdot\|$ is a norm on $\mathfrak{so}(n)$ (which we will take equal to the Frobenius norm in what follows).  When $n=3$, the matrix $R(t)$ can be thought of as rotation matrix specifying the orientation of a rigid body at time $t$, so that $\Omega$ and $\dot{\Omega}$ correspond the angular velocity and angular acceleration, respectively, of the body in a body-fixed frame.  In a typical application, a sequence of target directions is given, and a minimum acceleration curve passing through the target directions at specified times $0=\tau_0 < \tau_1 < \dots < \tau_M = T$ is sought.  More explicitly, given a sequence of vectors $v_0,v_1,\dots,v_M \in \mathbb{R}^n$, the task is to find a continuously differentiable map $R : [0,T] \rightarrow SO(n)$ which locally minimizes~(\ref{torque}) and satisfies $R(0)=I$ and $R(\tau_j)v_0=v_j$, $j=1,2,\dots,M$.

To state the minimization problem precisely, let $H^2(0,T;\mathbb{R}^{n \times n})$ denote the space of $\mathbb{R}^{n \times n}$-valued functions on $(0,T)$ with square-integrable second derivatives.  By the Sobolev embedding theorem, these functions are continuously differentiable, and we may define
\[
\mathcal{V}([0,T],SO(n)) = \{R \in H^2(0,T; \mathbb{R}^{n \times n}) : R(t) \in SO(n) \, \forall t \in [0,T] \text{ and } R(0)=I\}.
\]
For each $R \in \mathcal{V}([0,T],SO(n))$, denote $\Omega = R^T \dot{R}$.  The minimization problem we seek to approximate numerically reads
\begin{subequations} \label{minaccel}
\begin{alignat}{3} 
&\minimize_{R \in \mathcal{V}([0,T],SO(n))} &&\int_0^T \|\dot{\Omega}\|^2 \, dt \\
&\text{subject to } && R(\tau_j)v_0 = v_j, \quad j=1,2,\dots,M.
\end{alignat}
\end{subequations}
We remark that other variants of the constraints are possible, such as constraints on the values of $R$ and/or $\Omega$ at specified times.  These are easy to enforce using simple modifications to the setup detailed below.

A discretization of this problem can be constructed by searching for a minimizer within a finite-dimensional subspace $\mathcal{V}_h([0,T],SO(n)) \subset \mathcal{V}([0,T],SO(n))$.  Since functions in $\mathcal{V}([0,T],SO(n))$ are continuously differentiable, the same must be true of functions in $\mathcal{V}_h([0,T],SO(n))$.   
To this end, we consider below the two $C^1$ interpolants constructed in Section~\ref{sec:SOn}: the interpolant~(\ref{interp1}), which makes use of the embedding of $SO(n)$ in $\mathbb{R}^{n \times n}$, and the interpolant~(\ref{interp3}), which makes use of the identification of $SO(3)$ with the set of unit quaternions.

\subsection{Discretization with Matrices}  \label{sec:minaccel_Rnxn}

We begin by using the $C^1$ interpolant~(\ref{interp1}) to discretize~(\ref{minaccel}).  Choose a partition $0 = t_0 < t_1 < \dots < t_N = T$ of the interval $[0,T]$, and let
\[
\mathcal{V}_h([0,T],SO(n)) = \left\{ \mathcal{I}_{h,SO(n)}R : R \in \mathcal{V}([0,T],SO(n)) \right\},
\]
where $\mathcal{I}_{h,SO(n)}R \in C^1([0,T],SO(n))$ denotes the interpolant~(\ref{interp1}) detailed in Section~\ref{sec:Rnxn}.
Elements of $\mathcal{V}_h([0,T],SO(n))$ are functions $R : [0,T] \rightarrow SO(n)$ whose restrictions to each interval $[t_k,t_{k+1}]$ have the form
\begin{equation} \label{Roft}
R(t) = \mathcal{P}_{SO(n)} \left( \sum_{i=0}^1 \phi_i\left(\frac{t-t_k}{t_{k+1}-t_k}\right) R_{k+i} + (t_{k+1}-t_k) \psi_i\left(\frac{t-t_k}{t_{k+1}-t_k}\right) R_{k+i} \Omega_{k+i} \right),
\end{equation}
where $\{R_k\}_{k=0}^N \subset SO(n)$, $\{\Omega_k\}_{k=0}^N \subset \mathfrak{so}(n)$, and $\phi_i : [0,1] \rightarrow \mathbb{R}$ and $\psi_i : [0,1] \rightarrow \mathbb{R}$ are the scalar-valued Hermite cubic polynomials~(\ref{hermite1}-\ref{hermite2}). 
Note that for each $k$, the values of $R(t)$ and $\dot{R}(t)$ at $t=t_k$ are related to $R_k$ and $\Omega_k$ via
\begin{align*}
R(t_k) &= R_k, \\
\dot{R}(t_k) &= R_k\Omega_k.
\end{align*}
The discretization of~(\ref{minaccel}) reads
\begin{subequations} \label{minaccel_h}
\begin{alignat}{3} 
&\minimize_{R \in \mathcal{V}_h([0,T],SO(n))} &&\int_0^T \|\dot{\Omega}\|^2 \, dt \label{minaccel_ha} \\
&\text{subject to } && R(\tau_j)v_0 = v_j, \quad j=1,2,\dots,M, \label{minaccel_hb}
\end{alignat}
\end{subequations}
where, as before, $\Omega = R^T \dot{R}$.  Upon approximating the integral in~(\ref{minaccel_ha}) with quadrature, the problem~(\ref{minaccel_h}) is a constrained minimization problem in the unknowns $\{R_k\}_{k=0}^N \subset SO(n)$ and $\{\Omega_k\}_{k=0}^N \subset \mathfrak{so}(n)$.
Below we show that it can be recast as an unconstrained least-squares problem, thereby admitting a relatively efficient solution.  

For simplicity, let $n=3$ and assume that the partition $0 = t_0 < t_1 < \dots < t_N = T$ has been chosen in such a way that the set of target direction times $\{\tau_j\}_{j=1}^M$ is a subset of $\{t_k\}_{k=0}^N$.  That is, for each $j=0,1,\dots,M$, there exists $k_j \in \{0,1,\dots,N\}$ such that
\[
t_{k_j} = \tau_j.
\]
Fix a sequence $\{\bar{R}_k\}_{k=0}^N \subset SO(3)$ satisfying $\bar{R}_0 = I$ and 
\[
\bar{R}_{k_j} v_0 = v_j, \quad j=1,2,\dots,M.
\]
A natural choice is to set $\bar{R}_0 = I$ and define the sequence inductively by setting
\[
\bar{R}_{k_j+i} = \exp\left( \frac{i}{k_{j+1}-k_j} \widehat{a}_j  \right) \bar{R}_{k_j }, \quad i=1,2,\dots,k_{j+1}-k_j, \; j=0,1,\dots,M-1,
\]
where $a_j = v_j \times v_{j+1}$ and $\widehat{\cdot} : \mathbb{R}^3 \rightarrow \mathfrak{so}(3)$ denotes the ``hat map''
\[
\widehat{u} = \begin{pmatrix} 0 & -u_3 & u_2 \\ u_3 & 0 & -u_1 \\ -u_2 & u_1 & 0 \end{pmatrix}.
\]
Relative to this reference sequence $\{\bar{R}_k\}_{k=0}^N$, we can parametrize the sequence $\{R_k\}_{k=0}^N$ with variables $\{b_k\}_{k=0}^N \subset \mathbb{R}^3$ given by
\begin{equation} \label{RkRkbar}
\widehat{b}_k = \log(\bar{R}_k^T R_k) \iff R_k = \bar{R}_k \exp(\widehat{b}_k).
\end{equation}
These variables measure the deviation of $\{R_k\}_{k=0}^N$ from the reference sequence $\{\bar{R}_k\}_{k=0}^N$.  They offer two practical advantages.  First, they belong to a linear space, and second, they render the constraint~(\ref{minaccel_hb}) trivial to enforce.  Indeed,~(\ref{minaccel_hb}) holds for a given $j$ if and only if $b_{k_j} = \beta_{k_j} v_0$ for some scalar $\beta_{k_j}$.  In words, $R_{k_j}$ must differ from $\bar{R}_{k_j}$ (if at all) by a rotation about the axis $v_0$.  

In a similar manner, we can parametrize each unknown $\Omega_k \in \mathfrak{so}(3)$ with its preimage $\omega_k \in \mathbb{R}^3$ under the hat map, i.e.
\begin{equation} \label{Omegak}
\Omega_k = \widehat{\omega}_k.
\end{equation}
If we denote
\begin{align*}
\mathcal{K} &= \{k_1, k_2, \dots, k_M\}, \\
\mathcal{K}^c &= \{1,2,\dots,N\} \setminus \mathcal{K},
\end{align*}
then the problem~(\ref{minaccel}) reduces to an unconstrained minimization problem in the unknowns $\{b_k\}_{k \in \mathcal{K}^c} \subset \mathbb{R}^3$, $\{\beta_k\}_{k \in \mathcal{K}} \subset \mathbb{R}$, and $\{\omega_k\}_{k=0}^N \subset \mathbb{R}^3$.  Upon approximating the integral in~(\ref{minaccel_ha})  with a quadrature rule of the form
\begin{equation} \label{quadrature}
\int_0^T \|\dot{\Omega}\|^2 \, dt \approx \sum_{i=1}^{N_q} w_i \|\dot{\Omega}(s_i)\|^2
\end{equation}
with weights $w_i>0$ and nodes $s_i \in [0,T]$, $i=1,2,\dots,N_q$, this minimization problem reads
\begin{alignat}{3} \label{lsq}
\minimize_{x} \;&& g(x)^T g(x),
\end{alignat}
% 3(N+1) + M + 3(N-M) = 6N - 2M+3
% 6(N+1)-3 - 2M = 6N-2M+3
where $x \in \mathbb{R}^{6N-2M+3}$ is a vector containing the unknowns $\{b_k\}_{k \in \mathcal{K}^c} \subset \mathbb{R}^3$, $\{\beta_k\}_{k \in \mathcal{K}} \subset \mathbb{R}$, and $\{\omega_k\}_{k=0}^N \subset \mathbb{R}^3$; $g(x) \in \mathbb{R}^{3N_q}$ is a vector with components
\begin{equation} \label{residualmat}
g_{3i+j}(x) = \sqrt{w_i} \alpha_{ij}, \quad i=1,2,\dots,N_q, \; j=1,2,3;
\end{equation}
and $\alpha_{i1},\alpha_{i2},\alpha_{i3}$ are the 3 independent components of $\dot{\Omega}(s_i)$.
Here, of course, $\dot{\Omega}$ is obtained from the vector of unknowns $x$ by evaluating~(\ref{RkRkbar}) and~(\ref{Omegak}) to recover $R_k$ and $\Omega_k$, substituting into~(\ref{Roft}), and differentiating $\Omega=R^T \dot{R}$.  For further details on evaluating~(\ref{residualmat}) (as well as its Jacobian) see Section~\ref{sec:algorithm}.

\subsection{Discretization with Quaternions} \label{sec:minaccel_quat}

Assuming still that $n=3$, there is a second way to discretize the optimization problem~(\ref{minaccel}).  Instead of using the interpolant~(\ref{interp1}) to approximate functions in $\mathcal{V}([0,T],SO(3))$, we may use the interpolant~(\ref{interp3}), which takes advantage of the identification of $SO(3)$ with the set of unit quaternions.  

More precisely, let $\{t_k\}_{k=0}^N$, $\{\tau_j\}_{j=0}^M$, $\{v_j\}_{j=0}^M$, $\mathcal{K}$, and $\mathcal{K}^c$ be as in Section~\ref{sec:minaccel_Rnxn}.  Define
\[
\mathcal{V}([0,T],S^3) = \{ u \in H^2([0,T],\mathbb{R}^4) : u(t) \in S^3 \, \forall t \in [0,T] \text{ and } u(0) = (1,0,0,0) \}
\]
and
\[
\mathcal{V}_h([0,T],S^3) = \left\{ \mathcal{I}_{h,S^3}u : u \in \mathcal{V}([0,T],S^3) \right\},
\]
where $\mathcal{I}_{h,S^3}u \in C^1([0,T],S^3)$ denotes the interpolant~(\ref{interp3}) detailed in Section~\ref{sec:quat}.  Elements of $\mathcal{V}_h([0,T],S^3)$ are functions $u : [0,T] \rightarrow S^3$ whose restrictions to each interval $[t_k,t_{k+1}]$ have the form
\begin{equation} \label{uoft}
u(t) = \mathcal{P}_{S^3} \left( \sum_{i=0}^1 \phi_i\left(\frac{t-t_k}{t_{k+1}-t_k}\right) u_{k+i} + (t_{k+1}-t_k) \psi_i\left(\frac{t-t_k}{t_{k+1}-t_k}\right) u_{k+i} (0,\omega_{k+i}) \right),
\end{equation}
where $\{u_k\}_{k=0}^N \subset S^3$, $\{\omega_k\}_{k=0}^N \subset \mathbb{R}^3$, and $\phi_i : [0,1] \rightarrow \mathbb{R}$ and $\psi_i : [0,1] \rightarrow \mathbb{R}$ are the scalar-valued Hermite cubic polynomials~(\ref{hermite1}-\ref{hermite2}).  Here, $\mathcal{P}_{S^3}(q) = q/\|q\|$, and $u_{k+i} (0,\omega_{k+i})$ denotes the product of two quaternions $u_{k+i}$ and $(0,\omega_{k+i})$. 

With this choice of finite-dimensional function space, the discretization of~(\ref{minaccel}) reads
\begin{subequations} \label{minaccel_quat}
\begin{alignat}{3} 
&\minimize_{u \in \mathcal{V}_h([0,T],S^3)} &&\int_0^T \|\dot{\omega}\|^2 \, dt \label{minaccel_quata} \\
&\text{subject to } && u(\tau_j) \cdot v_0 = v_j, \quad j=1,2,\dots,M, \label{minaccel_quatb}
\end{alignat}
\end{subequations}
where $(0,\omega(t)) = u(t)^{-1} \dot{u}(t)$, and $q \cdot v$ denotes the action of a unit quaternion $q$ on a vector $v \in \mathbb{R}^3$:
\[
 (0,q \cdot v) = q (0,v)q^{-1}.
\] 
The action so defined realizes a rotation of $v$ by $q$ under the usual identification of quaternions with rotations.

As in Section~\ref{sec:minaccel_Rnxn}, the problem~(\ref{minaccel_quat}) can be recast as an unconstrained least squares problem.  To do so, fix a reference sequence $\{\bar{u}_k\}_{k=0}^N$ that satisfies
\[
\bar{u}_{k_j} \cdot v_0 = v_j, \quad j=1,2,\dots,M.
\]
Define $b_k \in \mathbb{R}^3$ via
\begin{equation} \label{ukukbar}
(0,b_k) = \log(\bar{u}_k^* u_k) \iff u_k = \bar{u}_k \exp(0,b_k),
\end{equation}
where
\[
\exp(0,v) = \left( \cos\left( \frac{\|v\|}{2} \right), \frac{v}{\|v\|} \sin\left( \frac{\|v\|}{2} \right) \right)
\]
denotes the quaternion exponential, and $\log$ denotes its inverse.  The constraint~(\ref{minaccel_quatb}) then reduces to the requirement that for each $j$, $b_{k_j} = \beta_{k_j} v_0$ for some scalar $\beta_{k_j}$.  

It follows that, after approximating~(\ref{minaccel_quata}) with a quadrature rule of the form~(\ref{quadrature}), the problem~(\ref{minaccel_quat}) can be written in the form~(\ref{lsq}).  In this formulation, $x \in \mathbb{R}^{6N-2M+3}$ is a vector containing the unknowns $\{b_k\}_{k \in \mathcal{K}^c} \subset \mathbb{R}^3$, $\{\beta_k\}_{k \in \mathcal{K}} \subset \mathbb{R}$, and $\{\omega_k\}_{k=0}^N \subset \mathbb{R}^3$; $g(x) \in \mathbb{R}^{3N_q}$ is a vector with components
\begin{equation} \label{residualquat}
g_{3i+j}(x) = \sqrt{w_i} \alpha_{ij}, \quad i=1,2,\dots,N_q, \; j=1,2,3;
\end{equation}
$\alpha_{i1},\alpha_{i2},\alpha_{i3}$ are the 3 nonzero components of $\dot{\omega}(s_i)$; and $\omega(t) = u(t)^{-1} \dot{u}(t)$.   A detailed algorithm for evaluating~(\ref{residualquat}) and its Jacobian is given in Section~\ref{sec:algorithm}.

\subsection{Numerical Examples}

To solve the least-squares problem~(\ref{lsq}), we consider here the Levenberg-Marquardt algorithm~\cite{more1978levenberg}, which computes a solution via the iteration 
\begin{equation} \label{lmalg}
x^{(m+1)} = x^{(m)} + \left(J(x^{(m)})^T J(x^{(m)}) + \lambda I\right)^{-1} J(x^{(m)})^T g(x^{(m)}),
\end{equation}
starting from an initial guess $x^{(0)}$.  Here, $J(x^{(m)}) \in \mathbb{R}^{3N_q \times (6N-2M+3)}$ denotes the Jacobian of $g$ at $x=x^{(m)}$, and $\lambda$ is a parameter chosen heuristically.  
In our numerical experiments, we initiated $\lambda=0.01$ and updated its value at each iteration via the following heuristic: If the update~(\ref{lmalg}) leads to a decrease in the objective function $g(x)^T g(x)$, accept the update and decrease $\lambda$ by a factor of 10; otherwise, reject the update and increase $\lambda$ by a factor of 10. 

\begin{table}[t]
\centering
\pgfplotstabletypeset[
every head row/.style={after row=\midrule},
create on use/rate1/.style={create col/dyadic refinement rate={1}},
create on use/rate2/.style={create col/dyadic refinement rate={2}},
columns={0,1,rate1,2,rate2},
columns/0/.style={sci zerofill,column type/.add={}{|},column name={$N$}},
columns/1/.style={sci zerofill,precision=3,column type/.add={}{|},column name={$L^2$-error}},
columns/2/.style={sci zerofill,precision=3,column type/.add={}{|},column name={$H^1$-error}}, 
columns/rate1/.style={fixed zerofill,precision=3,column type/.add={}{|},column name={Order}},
columns/rate2/.style={fixed zerofill,precision=3,column name={Order}}
]
{Data/materr.dat}
\caption{Error in the numerical solution of~(\ref{minaccel}) on the interval $[0,1]$ with target directions given by~(\ref{example1vectors}), obtained using the matrix-based discretization detailed in Section~\ref{sec:minaccel_Rnxn}.  The solution was computed on a uniform partition of $[0,1]$ into $N$ intervals of equal length.}
\label{tab:matconvergence}
\end{table}

\begin{table}[t]
\centering
\pgfplotstabletypeset[
every head row/.style={after row=\midrule},
create on use/rate1/.style={create col/dyadic refinement rate={1}},
create on use/rate2/.style={create col/dyadic refinement rate={2}},
columns={0,1,rate1,2,rate2},
columns/0/.style={sci zerofill,column type/.add={}{|},column name={$N$}},
columns/1/.style={sci zerofill,precision=3,column type/.add={}{|},column name={$L^2$-error}},
columns/2/.style={sci zerofill,precision=3,column type/.add={}{|},column name={$H^1$-error}}, 
columns/rate1/.style={fixed zerofill,precision=3,column type/.add={}{|},column name={Order}},
columns/rate2/.style={fixed zerofill,precision=3,column name={Order}}
]
{Data/quaterr.dat}
\caption{Error in the numerical solution of~(\ref{minaccel}) on the interval $[0,1]$ with target directions given by~(\ref{example1vectors}), obtained using the quaternion-based discretization detailed in Section~\ref{sec:minaccel_quat}.  The solution was computed on a uniform partition of $[0,1]$ into $N$ intervals of equal length.}
\label{tab:quatconvergence}
\end{table}

\begin{figure}[t]
    \centering
    \begin{subfigure}[b]{0.3\textwidth}
        \includegraphics[width=\textwidth,trim=17.5in 2in 17.5in 1.9in,clip=true]{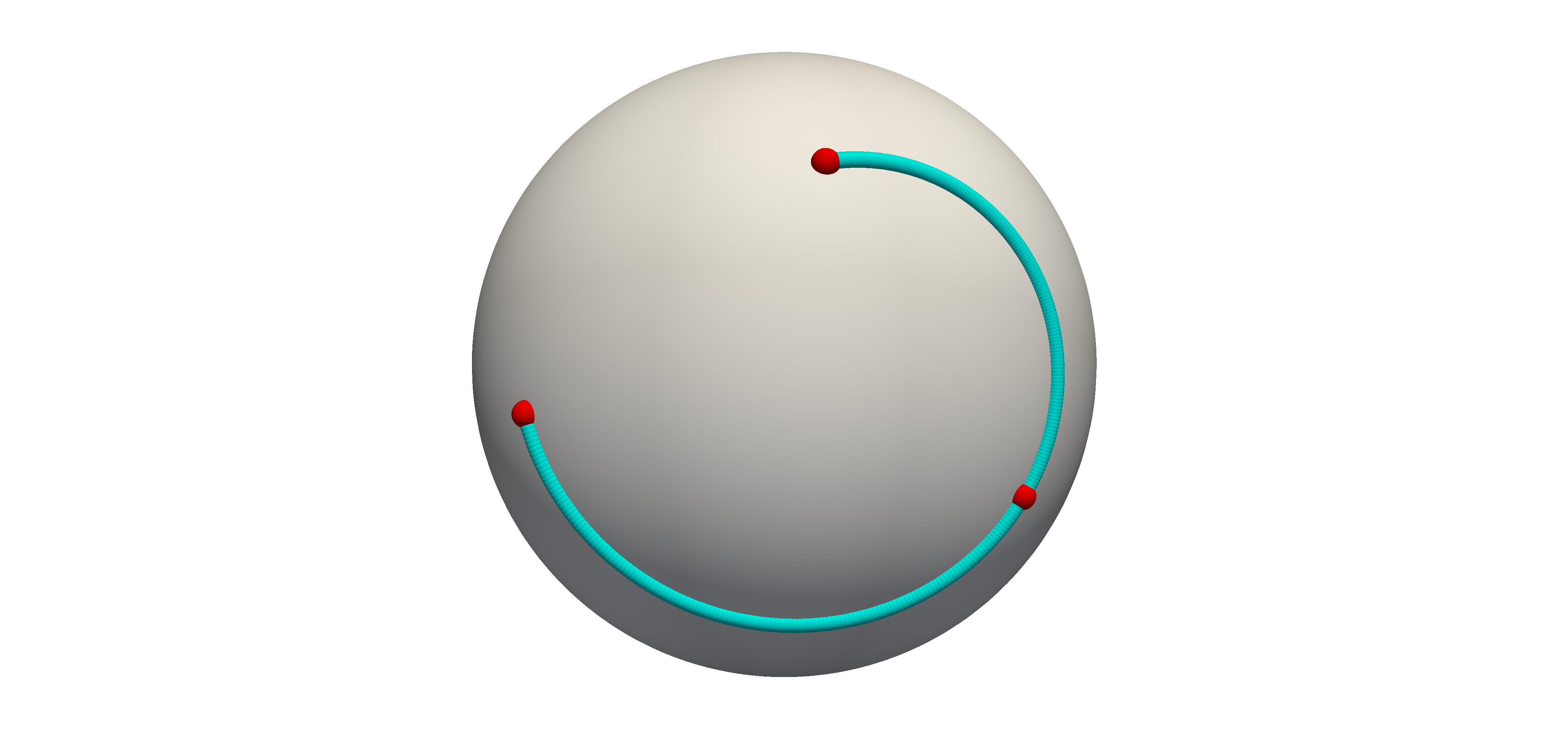}
        \caption{}
        \label{fig:S2curve}
    \end{subfigure}
    \begin{subfigure}[b]{0.3\textwidth}
        \includegraphics[width=\textwidth,trim=18.5in 1.6in 16.5in 1.8in,clip=true]{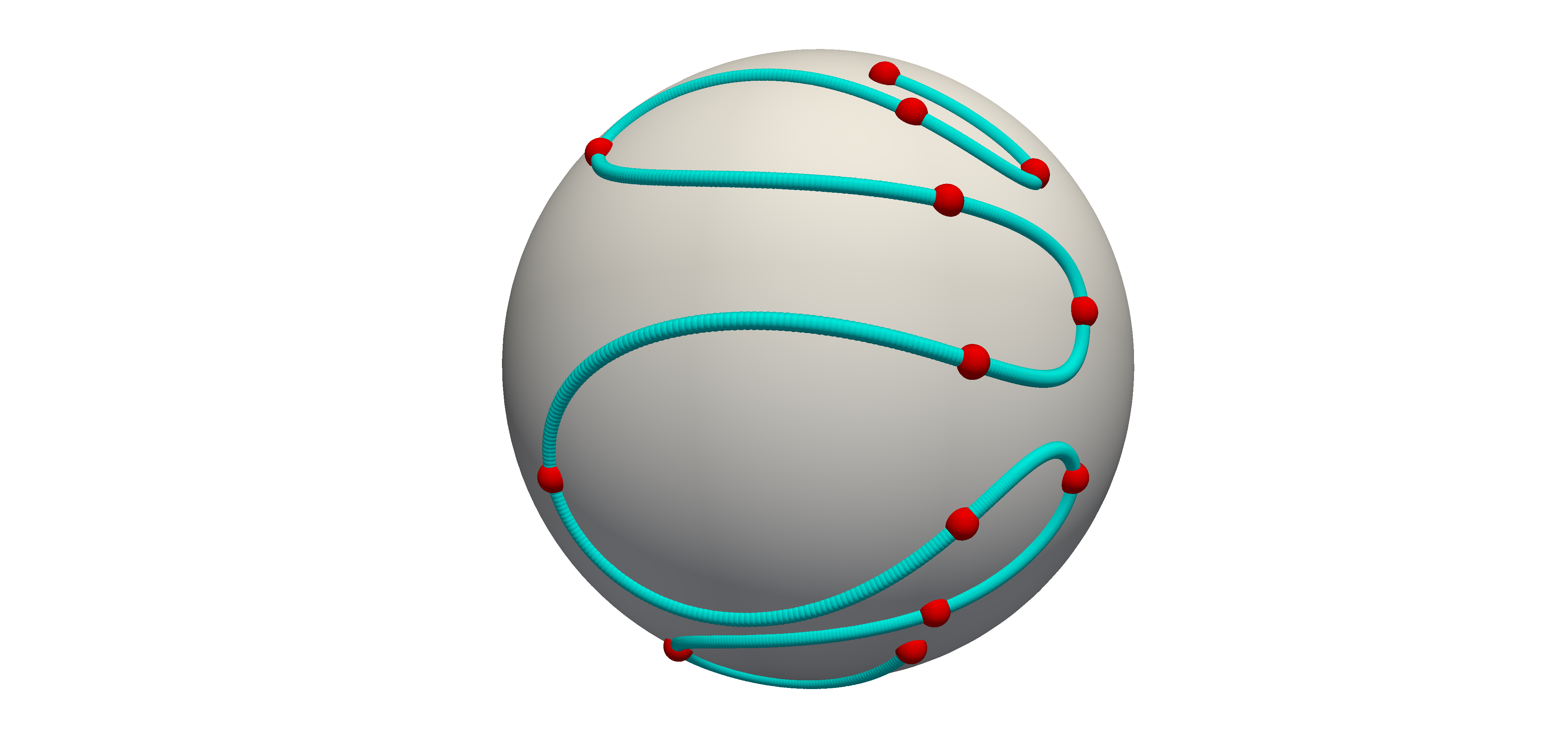}
        \caption{}
        \label{fig:slalom}
    \end{subfigure}
    \begin{subfigure}[b]{0.3\textwidth}
        \includegraphics[width=\textwidth,trim=17.5in 1.9in 17.5in 1.9in,clip=true]{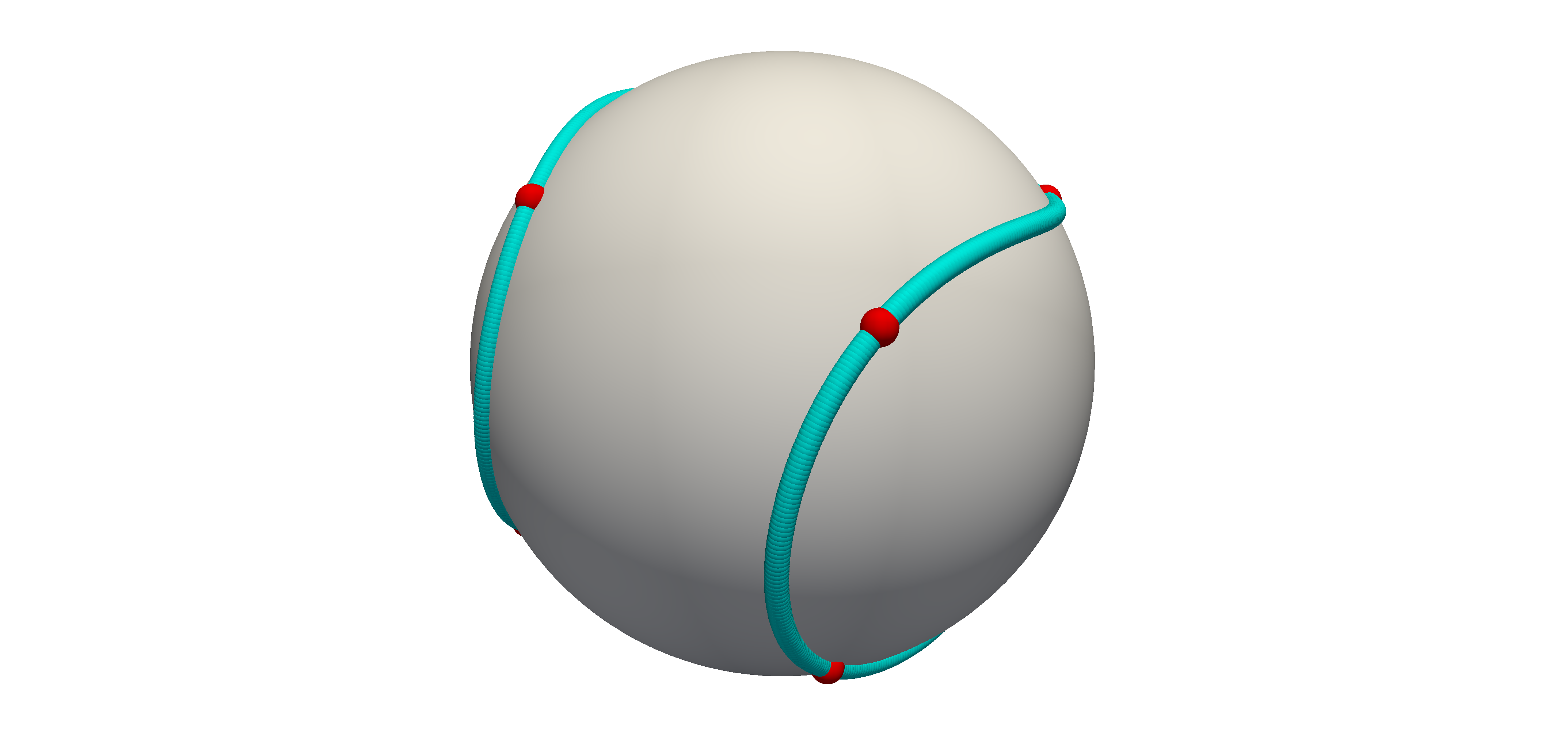}
        \caption{}
        \label{fig:bb}
    \end{subfigure}
\caption{Numerical solutions to the minimum acceleration problem~(\ref{minaccel}) with target directions $\{v_j\}_{j=0}^M \subset \mathbb{R}^3$ given by: (a) equation~(\ref{example1vectors}), (b) equation~(\ref{slalom}), and (c) equations~(\ref{bb1}-\ref{bb2}).  To visualize the minimum acceleration curves $R : [0,T] \rightarrow SO(3)$, we have plotted $R(t)v_0$, $t \in [0,T]$, on the unit sphere.  The target directions $\{v_j\}_{j=0}^M$ are marked in red.}
\end{figure}

We applied this algorithm to compute minimum acceleration curves on the interval $[0,1]$ with $M+1=3$ target directions (equally spaced in time) given by 
\begin{equation} \label{example1vectors}
v_0 = (1,0,0), \quad v_1 = \left(0,1,0\right), \quad v_2 = \left(\frac{1}{\sqrt{6}},\frac{1}{\sqrt{6}},\frac{2}{\sqrt{6}}\right).
\end{equation}
We solved the problem on a uniform partition $0=t_0<t_1<\dots<t_N=1$ of $[0,1]$ into $N$ intervals of equal length using two discretizations: the matrix-based discretization detailed in Section~\ref{sec:minaccel_Rnxn}, and the quaternion-based discretization detailed in Section~\ref{sec:minaccel_quat}.  Figure~\ref{fig:S2curve} shows a representative numerical solution to this problem (obtained with the quaternion-based discretization with $N=8$), which we have visualized by plotting $u(t) \cdot v_0$, $t \in [0,1]$, on the unit sphere.   Tables~\ref{tab:matconvergence} and~\ref{tab:quatconvergence} show the errors between the computed solutions and the exact solution as a function of $N$.  Since an exact solution is not known analytically, we approximated it using a refined discretization ($N=1024$).  The errors reported in the table for the matrix-based discretization are the $L^2$-error
\[
\left( \int_0^1 \|R(t)-R_{exact}(t)\|^2 \, dt \right)^{1/2}
\]
between the approximate solution $R(t) \in SO(3)$ and the exact solution $R_{exact}(t) \in SO(3)$, and the $H^1$-error
\[
\left( \int_0^1 \|\dot{R}(t)-\dot{R}_{exact}(t)\|^2 \, dt \right)^{1/2}.
\]
Similarly, for the quaternion-based discretization, the errors reported are the $L^2$-error
\[
\left( \int_0^1 \|u(t)-u_{exact}(t)\|^2 \, dt \right)^{1/2}
\]
and the $H^1$-error
\[
\left( \int_0^1 \|\dot{u}(t)-\dot{u}_{exact}(t)\|^2 \, \right)^{1/2}
\]
between the approximate solution $u(t) \in S^3$ and the exact solution $u_{exact}(t) \in S^3$.  All integrals were computed using 4-point Gaussian quadrature elementwise.

The results in Tables~\ref{tab:matconvergence} and~\ref{tab:quatconvergence} indicate that both discretizations achieve optimal rates of convergence under refinement.  Namely, the discretizations converge with order 4 in the $L^2$-norm and with order 3 in the $H^1$-norm, consistent with the theoretical interpolation accuracy of the interpolants~(\ref{interp1}) and~(\ref{interp3}).  However, the quaternion-based discretization outperforms the matrix-based discretization in an absolute sense.  For each $N$, the error committed by the quaternion-based discretization is between one and two orders of magnitude smaller than that committed by the matrix-based discretization.  We also observed that the matrix-based discretization requires larger $N$ before the the asymptotic convergence rates are realized, which is why we have reported errors for larger values of $N$ in Table~\ref{tab:matconvergence} than in Table~\ref{tab:quatconvergence}.  The inferiority of the matrix-based discretization is compounded by the fact that, for fixed $N$, it requires more computational effort to evaluate the interpolant and its derivatives than does the quaternion-based discretization.  Indeed, to evaluate the interpolant, the former requires computing the polar decomposition of a matrix, while the latter requires normalizing a vector, a decidedly less expensive task.

%\paragraph{Other Examples.}  
\subsubsection*{Other Examples}  
For illustrative purposes, we have numerically computed and plotted in Figures~\ref{fig:slalom} and~\ref{fig:bb} two other minimum acceleration curves.  Figure~\ref{fig:slalom} shows a numerical solution for the case in which $M=12$ and
\begin{equation} \label{slalom}
v_j = \mathcal{P}_{S^2} \left( \frac{1}{2}+\frac{4}{5}\cos\left( \frac{\pi j}{2} \right), \, \frac{1}{2}, \, 1-\frac{j}{6} \right), \quad \tau_j = \frac{j}{12}, \quad  j=0,1,\dots,12,
\end{equation}
where $\mathcal{P}_{S^2}(w) = \frac{w}{\|w\|}$ for each nonzero $w \in \mathbb{R}^3$.  
Figure~\ref{fig:bb} shows a numerical solution for the case in which $M=8$, $\tau_j = \frac{j}{8}$ for each $j$, and
\begin{align}
v_0 &= v_8 = \frac{(1,1,1)}{\sqrt{3}}, &\quad v_1 &= \frac{(-1,1,1)}{\sqrt{3}}, &\quad v_2 &= \frac{(-1,-1,1)}{\sqrt{3}}, &\quad v_3 &= \frac{(1,-1,1)}{\sqrt{3}}, \label{bb1} \\
v_4 &= \frac{(1,-1,-1)}{\sqrt{3}}, &\quad v_5 &= \frac{(-1,-1,-1)}{\sqrt{3}}, &\quad v_6 &= \frac{(-1,1,-1)}{\sqrt{3}}, &\quad v_7 &= \frac{(1,1,-1)}{\sqrt{3}}. \label{bb2}
\end{align}
In the latter example, we imposed periodicity on the solution by introducing the constraints $u(1)=u(0)$ and $\dot{u}(1)=\dot{u}(0)$.  In both examples, we used the quaternion-based discretization on a partition of $[0,1]$ into $N=M$ intervals of equal length.

\subsection{Algorithmic Details} \label{sec:algorithm}

In this section, we detail a pair of algorithms for evaluating the residual vectors~(\ref{residualmat}) and~(\ref{residualquat}) and their Jacobians.

In order to specify the Jacobians of~(\ref{residualmat}) and~(\ref{residualquat}), it is necessary to specify an ordering of the variables $\{b_k\}_{k \in \mathcal{K}^c} \subset \mathbb{R}^3$, $\{\beta_k\}_{k \in \mathcal{K}} \subset \mathbb{R}$, and $\{\omega_k\}_{k=0}^N \subset \mathbb{R}^3$ that constitute the vector $x$ on which the residual vectors depend. We assume that the variables are ordered as
\begin{equation} \label{ordering}
x = (\omega_0,y_1,\omega_1,y_2,\omega_2,\dots,y_N,\omega_N)
\end{equation}
where $y_k = \beta_k$ if $k \in \mathcal{K}$ and $y_k = b_k$ if $k \in \mathcal{K}^c$.  Note that the dimension of $y_k$ (which we denote by $n_k$ in what follows) varies with $k$.  This fact slightly complicates the indexing of variables in the algorithms that follow, but the ordering~(\ref{ordering}) endows the Jacobian with an appealing sparsity pattern.  We also assume that a $P$-point quadrature rule is adopted elementwise, so that $N_q=NP$ and the integral of $\|\dot{\Omega}\|^2$ is approximated as
\begin{equation} \label{elementwisequadrature}
\int_0^T \|\dot{\Omega}\|^2 \, dt \approx \sum_{k=0}^{N-1} \sum_{p=1}^P (t_{k+1}-t_k) W_p \|\dot{\Omega}((1-\xi_p)t_k+\xi_p t_{k+1})\|^2
\end{equation}
for some quadrature weights $\{W_p\}_{p=1}^P \subset \mathbb{R}$ and nodes $\{\xi_p\}_{p=1}^P \subset [0,1]$ designed for integration on the unit interval.

We now state the algorithms, beginning with the matrix-based discretization.

\vspace{1em}
\begin{myenv}
\hrule\vspace{0.2em}
\captionof{mytype}{This algorithm evaluates the residual vector~(\ref{residualmat}) and its Jacobian $J$ for the matrix-based discretization detailed in Section~\ref{sec:Rnxn}.}
\vspace{-1.2em}\hrule\vspace{0.2em}
\begin{algorithmic}[1]
\Require Variables $\{b_k\}_{k \in \mathcal{K}^c} \subset \mathbb{R}^3$, $\{\beta_k\}_{k \in \mathcal{K}} \subset \mathbb{R}$, and $\{\omega_k\}_{k=0}^N \subset \mathbb{R}^3$; reference sequence $\{\bar{R}_k\}_{k=0}^N \subset SO(3)$; quadrature weights $\{W_p\}_{p=1}^P \subset \mathbb{R}$ and nodes $\{\xi_p\}_{p=1}^P \subset [0,1]$; vector $v_0 \in \mathbb{R}^3$
\Ensure Residual vector $g$ and its Jacobian $J$
\State $g = 0$ (size $3NP \times 1$)
\State $J = 0$ (size $3NP \times (6N-2M+3)$, where $M=|\mathcal{K}|$)
\State $e_1 = (1,0,0)$, $e_2 = (0,1,0)$, $e_3 = (0,0,1)$
\State $n_0 = 0$
\State $R_0  = \bar{R}_0$
\For{$k=1,2,\dots,N$} \Comment{\parbox[t]{.68\linewidth}{Calculate $\{R_k\}_{k=1}^N$ from $\{\bar{R}_k\}_{k=1}^N$, $\{b_k\}_{k \in \mathcal{K}^c}$, $\{\beta_k\}_{k \in \mathcal{K}}$.}}
\If{$k \in \mathcal{K}$}
\State $n_k = 1$
\State \label{algexp1} $R_k = \bar{R}_k \exp(\beta_k \widehat{v}_0)$
\State $\frac{\partial R_k}{\partial \beta_k} = R_k \widehat{v}_0$
\Else
\State $n_k = 3$
\State \label{algexp2} $R_k = \bar{R}_k \exp(\widehat{b}_k)$
\For{$j=1,2,3$}
\State \label{algdexp} $\frac{\partial R_k}{\partial b_{k,j}} = \bar{R}_k \mathrm{dexp}_{\widehat{b}_k} \widehat{e}_j$
\EndFor
\EndIf
\EndFor
\For{$k=0,1,\dots,N-1$} \Comment{\parbox[t]{.4\linewidth}{Loop over elements $(t_k,t_{k+1})$.}}
\State $h = t_{k+1} - t_k$
\For{$p=1,2,\dots,P$} \Comment{\parbox[t]{.4\linewidth}{Loop over quadrature points $\xi_p$.}}
\State $A = \hspace{1.8em} \sum_{i=0}^1 \Big( \phi_i(\xi_p) R_{k+i} + h \psi_i(\xi_p) R_{k+i} \widehat{\omega}_{k+i} \Big)$
\State $\dot{A} = h^{-1} \sum_{i=0}^1 \left( \dot{\phi}_i(\xi_p) R_{k+i} + h\dot{\psi}_i(\xi_p) R_{k+i} \widehat{\omega}_{k+i} \right)$
\State $\ddot{A} = h^{-2} \sum_{i=0}^1  \left( \ddot{\phi}_i(\xi_p) R_{k+i} + h \ddot{\psi}_i(\xi_p) R_{k+i} \widehat{\omega}_{k+i} \right)$
\State Use the iteration~(\ref{Xupdate2}-\ref{Tupdate2}) to calculate $T_\infty$. \Comment{\parbox[t]{.3\linewidth}{$T_\infty = R^T \ddot{R}$, where}}
\State \Comment{\parbox[t]{.3\linewidth}{$R(t) = \mathcal{P}_{SO(3)} A(t)$.}}
\State \label{algalphahat} $\widehat{\alpha} = \mathrm{skew}(T_\infty)$
\For{$j=1,2,3$}
\State $g_{3(Pk+p-1)+j} = \sqrt{hW_p} \alpha_j$
\EndFor
\For{$i=0,1$} \Comment{\parbox[t]{.48\linewidth}{Loop over variables on which $\alpha$}}
\For{$\ell = 1,2,\dots,n_{k+i}+3$} \Comment{\parbox[t]{.48\linewidth}{depends, namely, $\{y_k,\omega_k,y_{k+1},\omega_{k+1}\}$,}}
\If{$\ell \le n_{k+i}$} \Comment{\parbox[t]{.48\linewidth}{where  $y_k \in \{b_k,\beta_k\}$ and}}
\State \Comment{\parbox[t]{.48\linewidth}{$y_{k+1} \in \{b_{k+1},\beta_{k+1}\}$.}}
\If{$n_{k+i}=1$}
\State $\delta R_{k+i} = \frac{\partial R_{k+i}}{\partial \beta_{k+i}}$
\Else
\State $\delta R_{k+i} = \frac{\partial R_{k+i}}{\partial b_{k+i,\ell}}$
\EndIf
\State $\delta A = \hspace{2.3em} \phi_i(\xi_p) \delta R_{k+i} + h \psi_i(\xi_p) \delta R_{k+i} \widehat{\omega}_{k+i}$
\State $\delta \dot{A} = h^{-1} \left( \dot{\phi}_i(\xi_p) \delta R_{k+i} + h\dot{\psi}_i(\xi_p) \delta R_{k+i} \widehat{\omega}_{k+i} \right)$
\State $\delta \ddot{A} =  h^{-2} \left( \ddot{\phi}_i(\xi_p) \delta R_{k+i} + h\ddot{\psi}_i(\xi_p) \delta R_{k+i} \widehat{\omega}_{k+i} \right)$
\Else
\State $\delta\omega_{k+i} = e_{\ell-n_{k+i}}$
\State $\delta A = \hspace{1.1em} h \psi_i(\xi_p) R_{k+i} \widehat{\delta\omega}_{k+i}$
\State $\delta \dot{A} = \hspace{1.6em}  \dot{\psi}_i(\xi_p) R_{k+i} \widehat{\delta\omega}_{k+i}$
\State $\delta \ddot{A} = h^{-1} \ddot{\psi}_i(\xi_p) R_{k+i} \widehat{\delta\omega}_{k+i}$
\EndIf
\State Use the iteration~(\ref{Xupdate2}-\ref{Zupdate2}) \Comment{\parbox[t]{.4\linewidth}{$U_\infty = R^T \delta R$ and $Z_\infty = R^T \delta \ddot{R}$,}}
\State to calculate $U_\infty$ and $Z_\infty$. \Comment{\parbox[t]{.4\linewidth}{where $R(t) = \mathcal{P}_{SO(3)} A(t)$.}}
\State $\widehat{\delta\alpha} = \mathrm{skew}\left( U_\infty^T T_\infty + Z_\infty \right)$ \Comment{\parbox[t]{.4\linewidth}{Derivative of  $\widehat{\alpha}$ w.r.t.}} 
\State \Comment{\parbox[t]{.4\linewidth}{current variable.}}
\State $m = \ell + \sum_{r=0}^{k+i-1} (n_r+3)$ \Comment{\parbox[t]{.4\linewidth}{Index of current variable.}}
\For{$j=1,2,3$}
\State $J_{3(Pk+p-1)+j,m} = \sqrt{hW_p} \, \delta \alpha_j$
\EndFor
\EndFor
\EndFor
\EndFor
\EndFor
%\State\Return $g$, $J$
\end{algorithmic}
%\end{algorithm}
\end{myenv}
\hrule

\vspace{1em}

Note that in line~\ref{algalphahat} of the preceding algorithm, we have made use of the fact that $\Omega = R^T \dot{R}$ is skew-symmetric, so
\begin{align*}
\dot{\Omega} 
&= \frac{d}{dt} \mathrm{skew}(R^T \dot{R}) \\
&= \mathrm{skew}(R^T \ddot{R} + \dot{R}^T \dot{R}) \\
&= \mathrm{skew}(R^T \ddot{R}).
\end{align*}

Note also that lines~\ref{algexp1},~\ref{algexp2}, and~\ref{algdexp} of the preceding algorithm require the computation of the exponential $\exp : \mathfrak{so}(3) \rightarrow SO(3)$ and its derivative $\mathrm{dexp} : \mathfrak{so}(3) \times \mathfrak{so}(3) \rightarrow SO(3)$.  These maps can be computed explicitly with Rodrigues' formula
\[
\exp(\widehat{w}) = I + \frac{\sin\|w\|}{\|w\|} \widehat{w}   + \frac{1-\cos(\|w\|)}{\|w\|^2} \widehat{w}^2 
\]
and the formula~\cite{gallego2015compact}
\[
\mathrm{dexp}_{\widehat{w}} \widehat{v} = \frac{ (w \cdot v) \widehat{w} + \left( w \times (I-\exp(\widehat{w}))v\right)^{\widehat{\hspace{0.8em}}} }{\|w\|^2} \exp(\widehat{w}).
\]

Next, we detail the algorithm for the quaternion-based discretization.  Throughout the algorithm, we denote quaternion multiplication with concatenation.

\vspace{1em}
\begin{myenv}
\hrule\vspace{0.2em}
\captionof{mytype}{This algorithm evaluates the residual vector~(\ref{residualquat}) and its Jacobian for the quaternion-based discretization detailed in Section~\ref{sec:quat}.}
\vspace{-1.2em}\hrule\vspace{0.2em}
\begin{algorithmic}[1]
\Require Variables $\{b_k\}_{k \in \mathcal{K}^c} \subset \mathbb{R}^3$, $\{\beta_k\}_{k \in \mathcal{K}} \subset \mathbb{R}$, and $\{\omega_k\}_{k=0}^N \subset \mathbb{R}^3$; reference sequence $\{\bar{u}_k\}_{k=0}^N \subset S^3$; quadrature weights $\{W_p\}_{p=1}^P \subset \mathbb{R}$ and nodes $\{\xi_p\}_{p=1}^P \subset [0,1]$; vector $v_0 \in \mathbb{R}^3$
\Ensure Residual vector $g$ and its Jacobian $J$
\State $g = 0$ (size $3NP \times 1$)
\State $J = 0$ (size $3NP \times (6N-2M+3)$, where $M=|\mathcal{K}|$)
\State $e_1 = (1,0,0)$, $e_2 = (0,1,0)$, $e_3 = (0,0,1)$
\State $u_0  = \bar{u}_0$
\For{$k=1,2\dots,N$} \Comment{\parbox[t]{.68\linewidth}{Calculate $\{u_k\}_{k=1}^N$ from $\{\bar{u}_k\}_{k=1}^N$, $\{b_k\}_{k \in \mathcal{K}^c}$, $\{\beta_k\}_{k \in \mathcal{K}}$.}}
\If{$k \in \mathcal{K}$}
\State $n_k = 1$
\State $u_k = \bar{u}_k \left( \cos\left(\frac{\beta_k}{2}\right), v_0 \sin\left(\frac{\beta_k}{2}\right)  \right)$
\State $\frac{\partial u_k}{\partial \beta_k} = \frac{1}{2} \bar{u}_k \left( -\sin\left(\frac{\beta_k}{2}\right), v_0 \cos\left(\frac{\beta_k}{2}\right)  \right)$
\Else
\State $n_k = 3$
\State $u_k = \bar{u}_k \left( \cos\left(\frac{\|b_k\|}{2}\right), \frac{b_k}{\|b_k\|} \sin\left(\frac{\|b_k\|}{2}\right)  \right)$
\For{$j=1,2,3$}
%\State $\frac{\partial u_k}{\partial b_{k,j}} = \bar{u}_k \left( -\frac{b_{k,j}}{2\|b_k\|} \sin\left( \frac{\|b_k\|}{2} \right) , \frac{b_{k,j}b_k}{2\|b_k\|^2} \cos\left( \frac{\|b_k\|}{2} \right) + \left( \frac{e_j}{\|b_k\|} - \frac{b_{k,j} b_k}{\|b_k\|^3} \right)  \sin\left( \frac{\|b_k\|}{2} \right) \right)$
\State $\frac{\partial u_k}{\partial b_{k,j}} = \bar{u}_k \Big($ \parbox[t]{.6\linewidth}{$-\frac{b_{k,j}}{2\|b_k\|} \sin\left( \frac{\|b_k\|}{2} \right)$,\\ $\frac{b_{k,j}b_k}{2\|b_k\|^2} \cos\left( \frac{\|b_k\|}{2} \right) + \left( \frac{e_j}{\|b_k\|} - \frac{b_{k,j} b_k}{\|b_k\|^3} \right)  \sin\left( \frac{\|b_k\|}{2} \right) \Big)$}
\EndFor
\EndIf
\EndFor
\For{$k=0,1,\dots,N-1$} \Comment{\parbox[t]{.4\linewidth}{Loop over elements $(t_k,t_{k+1})$.}}
\State $h = t_{k+1} - t_k$
\For{$p=1,2,\dots,P$} \Comment{\parbox[t]{.4\linewidth}{Loop over quadrature points $\xi_p$.}}
\State $q = \hspace{1.8em} \sum_{i=0}^1 \Big( \phi_i(\xi_p) u_{k+i} + h \psi_i(\xi_p) u_{k+i} (0,\omega_{k+i}) \Big)$
\State $\dot{q} = h^{-1} \sum_{i=0}^1 \left( \dot{\phi}_i(\xi_p) u_{k+i} + h\dot{\psi}_i(\xi_p) u_{k+i} (0,\omega_{k+i}) \right)$
\State $\ddot{q} = h^{-2} \sum_{i=0}^1  \left( \ddot{\phi}_i(\xi_p) u_{k+i} + h \ddot{\psi}_i(\xi_p) u_{k+i} (0,\omega_{k+i}) \right)$
\State $\omega = q^{-1}\dot{q}$
\State $\alpha = \mathrm{Im}\left( q^{-1}\ddot{q} - \omega^2 \right)$
\For{$j=1,2,3$}
\State $g_{3(Pk+p-1)+j} = \sqrt{hW_p} \alpha_{j+1}$
\EndFor
\For{$i=0,1$} \Comment{\parbox[t]{.48\linewidth}{Loop over variables on which $\alpha$}}
\For{$\ell = 1,2,\dots,n_{k+i}+3$} \Comment{\parbox[t]{.48\linewidth}{depends, namely, $\{y_k,\omega_k,y_{k+1},\omega_{k+1}\}$,}}
\If{$\ell \le n_{k+i}$} \Comment{\parbox[t]{.48\linewidth}{where $y_k \in \{b_k,\beta_k\}$ and}}
\If{$n_{k+i}=1$} \Comment{\parbox[t]{.48\linewidth}{$y_{k+1} \in \{b_{k+1},\beta_{k+1}\}$.}}
\State $\delta u_{k+i} = \frac{\partial u_{k+i}}{\partial \beta_{k+i}}$
\Else
\State $\delta u_{k+i} = \frac{\partial u_{k+i}}{\partial b_{k+i,\ell}}$
\EndIf
\State $\delta q = \hspace{2.3em} \phi_i(\xi_p) \delta u_{k+i} + h \psi_i(\xi_p) \delta u_{k+i} (0,\omega_{k+i})$
\State $\delta \dot{q} = h^{-1} \left( \dot{\phi}_i(\xi_p) \delta u_{k+i} + h\dot{\psi}_i(\xi_p) \delta u_{k+i} (0,\omega_{k+i}) \right)$
\State $\delta \ddot{q} =  h^{-2} \left( \ddot{\phi}_i(\xi_p) \delta u_{k+i} + h\ddot{\psi}_i(\xi_p) \delta u_{k+i} (0,\omega_{k+i}) \right)$
\Else
\State $\delta\omega_{k+i} = e_{\ell-n_{k+i}}$
\State $\delta q = \hspace{1.1em} h \psi_i(\xi_p) u_{k+i} (0,\delta\omega_{k+i})$
\State $\delta \dot{q} = \hspace{1.6em}  \dot{\psi}_i(\xi_p) u_{k+i} (0,\delta\omega_{k+i})$
\State $\delta \ddot{q} = h^{-1} \ddot{\psi}_i(\xi_p) u_{k+i} (0,\delta\omega_{k+i})$
\EndIf
\State $\delta\omega = q^{-1}\delta \dot{q} - q^{-1}\delta q \, \omega$
\State $\delta \alpha = \mathrm{Im}\left( q^{-1} \delta \ddot{q} -q^{-1}\delta q \, q^{-1} \ddot{q}  - \omega \delta \omega - \delta \omega \omega \right)$ \Comment{\parbox[t]{.2\linewidth}{Derivative of $\alpha$}} 
\State \Comment{\parbox[t]{.2\linewidth}{w.r.t. current}}
\State \Comment{\parbox[t]{.2\linewidth}{variable.}}
\State $m = \ell + \sum_{r=0}^{k+i-1} (n_r+3)$ \Comment{\parbox[t]{.32\linewidth}{Index of current variable.}}
\For{$j=1,2,3$}
\State $J_{3(Pk+p-1)+j,m} = \sqrt{hW_p} \, \delta \alpha_{j+1}$
\EndFor
\EndFor
\EndFor
\EndFor
\EndFor
%\State\Return $g$, $J$
\end{algorithmic}
%\end{algorithm}
\end{myenv}
\hrule

\section{Conclusion}

This paper has studied a family of schemes for interpolating $SO(n)$-valued functions with the aid of an embedding.  We used these schemes to construct a numerical method for computing minimum acceleration curves on $SO(n)$.  Numerical experiments indicate that the numerical solutions produced in this fashion converge optimally to the exact solution under refinement.  We did not establish this theoretically, but the interpolation error estimates in Section~\ref{sec:theory} are a first step in that direction.  We also did not address the well-posedness of the minimum acceleration problem~(\ref{minaccel}).  Both of these topics are worthy of further study.  In fact, it seems worthwhile to pursue a systematic study of weak formulations of the equations that govern Riemannian cubics, as well as their discretization with manifold-valued finite elements.  A major step in this direction has been performed in~\cite{grohs2015optimal}, where the authors have presented the aforementioned theory not for Riemmannian cubics (a second-order variational problem), but for first-order variational problems involving manifold-valued functions.

\bibliographystyle{plain}
\bibliography{references}

\end{document}